\newtheorem{theorem}{Theorem}[section]
\newtheorem{lemma}[theorem]{Lemma}
\newtheorem{claim}[theorem]{Claim}
\newtheorem{cor}[theorem]{Corollary}
\newtheorem{open}{Open Question}
\newtheorem*{question}{Question}
\newtheorem{statement}[theorem]{Statement}
\newenvironment{proofof}[1]{\begin{proof}[Proof of #1]}{\end{proof}}
\newcommand{\GR}{\operatorname{GR}}
\journal{Discrete Mathematics}
\begin{document}

\begin{frontmatter}



\title{Bipartite and Euclidean Gallai-Ramsey Theory}


\author[mit]{Isabel McGuigan} 
\ead{iem@mit.edu}
\author[mit]{Katherine Pan}

\affiliation[mit]{organization={Department of Mathematics, Massachusetts Institute of Technology},
            addressline={77 Massachusetts Avenue}, 
            city={Cambridge},
            postcode={02139}, 
            state={MA},
            country={United States}}

\begin{abstract}
In this paper, we investigate the following Gallai-Ramsey question: how large must a complete bipartite graph $K_{n_1, n_2}$ be before any coloring of its edges with $r$ colors contains either a monochromatic copy of $G = K_{s,t}$ or a rainbow copy of $H = K_{s,t}$?  We demonstrate that the answer is linear in $r$, and provide more precise bounds for the specific case $s = 2$.  Furthermore, we also consider the following Euclidean Gallai-Ramsey question: given a configuration $H$ in Euclidean space, what is the smallest $n$ such that any $r$-coloring of $n$-dimensional Euclidean space contains a monochromatic or rainbow configuration congruent to $H$? Through a natural translation between edge colorings of the complete bipartite graph $K_{n_1,n_2}$ and colorings of a subset of $(n_1+n_2)$-dimensional Euclidean space, we prove new upper bounds on $n$ for some configurations which can be expressed as Cartesian products of simplices.
\end{abstract}

\begin{keyword}
Euclidean Ramsey \sep Ramsey \sep Gallai-Ramsey \sep bipartite


\end{keyword}

\end{frontmatter}



\section{Introduction}
\label{sec:intro}

\subsection{Gallai-Ramsey Theory}

Given two graphs $G$ and $H$ and an integer $r$, we may ask the following question:

\begin{question}
    What is the least positive integer $n$ such that any coloring of the edges of the complete graph $K_n$ with $r$ colors contains either a rainbow copy of $G$ (having all edges different colors) or a monochromatic copy of $H$ (having all edges the same color)?
\end{question}

The answer to this question is denoted by the Gallai-Ramsey number $\GR_r(G, H)$.  This question is an extension of the corresponding Ramsey-type question, which asks for the minimum $n$ such that any $r$-coloring of $K_n$ contains a monochromatic copy of $H$; indeed, the existence of the Gallai-Ramsey number is guaranteed by the existence of the corresponding Ramsey number $R_r(H)$ \cite{faudree10}.

Like Ramsey numbers, Gallai-Ramsey numbers are difficult to compute, and exact values are only known for a few classes of graphs.  In the earliest work on this subject, Gallai studied graph colorings which contain no rainbow triangle \cite{gallai67}; as a result, much subsequent work has focused on the case where $G = T_3$ is a triangle.  In this case, the Gallai-Ramsey number $\GR_r(T_3, G)$ is known exactly for several small graphs $H$ \cite{bruce19, faudree10, gregory20, gyarfas04,hamlin19,henry19,lei21,magnant22,zou19}. Even when $\GR_r(T_3, G)$ is not known exactly, its asymptotic behavior (as a function of $r$) is well understood \cite{gyarfas10}.  We refer the reader to \cite{FujitaMO14} for a dynamic survey of known Gallai-Ramsey results and discussion of related problems.

The landscape is different when $G$ is not a triangle.  Some work has been done on the case when $G$ is some fixed small graph, such as a triangle with a pendant edge \cite{fujita12,gutierrez17}, a path \cite{thomason06}, or a star \cite{bass16}. If $G$ is allowed to be large, some bounds for $\GR_r(G, H)$ are known when $H$ is a complete graph and $G$ is either a complete graph or a tree (of arbitrary size) \cite{wang23}, but the value of $\GR_r(G, H)$ for other values of $G, H$ is still open.

\subsection{Euclidean Ramsey Theory}

Another extension of the Ramsey problem deals with colorings of Euclidean space rather than graphs.  Given a configuration $K$ of points in Euclidean space and a positive integer $r$, we may ask the following question: 

\begin{question}
    Does there exist an integer $d$ such that every $r$-coloring of $d$-dimensional Euclidean space contains a monochromatic configuration congruent to $K$?
\end{question}

In the case that such an integer $d$ exists, the configuration $K$ is called \textit{Ramsey}, and we write $\mathbb E^d \xrightarrow{r} K$.  The question of which configurations are Ramsey is well studied \cite{franklrodl90,franklrodl86}, and the smallest possible value of $d$ for some fixed small $r$ and small Ramsey configurations is known exactly \cite{bona_euclidean_1993,bona_ramsey-type_1996,cantwell_finite_1996,shader_all_1976,Toth96}.  We refer the reader to the following surveys \cite{Graham82,Graham94,Graham2011OpenPI} for a more comprehensive understanding of known results and open problems in Euclidean Ramsey theory.

\subsection{Euclidean Gallai-Ramsey Theory}

Combining the previous two extensions, we may ask Gallai-Ramsey type questions in a Euclidean setting:

\begin{question}
     Given a positive integer $r$ and two configurations $K$, $K'$, does there exist an integer $d$ such that every $r$-coloring of $d$-dimensional Euclidean space contains a rainbow configuration congruent to $K$ or a monochromatic configuration congruent to $K'$?
\end{question}

Similar the the Euclidean Ramsey case, if such a $d$ exists, then we write $E^{d} \xrightarrow{r} (K, K')_{\GR}$.  Initial work \cite{cheng2023euclidean} \cite{mao2022euclidean} has established upper and lower bounds on $d$ for some specific configurations, including certain simplicies, squares, and rectangles.  In particular, \cite{mao2022euclidean} shows that, if $K$ is a rectangle, then
\begin{align}\label{eq:known_bound}E^{13r+6}\xrightarrow{r}(K, K)_{\GR}.\end{align}
The proof strategy is to construct a mapping from $K_{2r+5, 11r+1}$ to $E^{13r+4}$ such that a $K_{2,2}$ in $K_{2r+5, 11r+1}$ corresponds to a rectangle in $E^{13r+4}$; then, the problem reduces to the graph Gallai-Ramsey problem of proving that any $r$-coloring of $K_{2r+5, 11r+1}$ contains a monochromatic or rainbow $K_{2,2}$.

\subsection{Main results}
In this paper, we extend this method of proof by deriving new graph Gallai-Ramsey results and applying a variation of the map in \cite{mao2022euclidean} to translate these results into Euclidean Gallai-Ramsey results.  The configurations that this method produces results for are described in terms of the Cartesian products of simplices, where for two configurations $K_1 \in E^{n_1}$ and $K_2 \in E^{n_2}$, we define the Cartesian product $K_1\ast K_2 \in E^{n_1+n_2}$ by 
$$K_1\ast K_2 = \{(x_1,\dots,x_{n_1},y_1,\dots,y_{n_2}) \mid (x_1,\dots,x_{n_1}) \in K_1, (y_1,\dots, y_{n_2})\in K_2\}.$$
For example, the Cartesian product of two one-simplices (line segments) is a rectangle, and the Cartesian product of one one-simplex and one two-simplex (a line segment and a triangle) is a right triangular prism.

The Gallai-Ramsey numbers $\GR_r(T_3, K_{s, t})$ are already well-studied \cite{chen18,gutierrez17,liu21,wu19}, but the symmetric case when $H = G = K_{s,t}$ is not, and no nontrivial bounds on $\GR_r(K_{s,t}, K_{s,t})$ are known. 
 Our main graph Gallai-Ramsey result is that the Gallai-Ramsey number of any bipartite graph is linear in $r$:

\begin{theorem}\label{thm:K_st}
    For any pair of positive integers $s, t$, there exists a constant $C_{s,t}$ such that, for $n = C_{s,t}r$, every $r$-coloring of $K_{n,n}$ contains a monochromatic or rainbow $K_{s,t}$.
\end{theorem}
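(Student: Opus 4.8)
The plan is to reformulate the problem as a statement about $n \times n$ matrices with entries in $[r]$: writing the two parts of $K_{n,n}$ as $A$ and $B$ and recording the color of the edge $ab$ as the $(a,b)$ entry, a monochromatic $K_{s,t}$ becomes a constant $s \times t$ submatrix and a rainbow $K_{s,t}$ becomes an $s \times t$ submatrix whose $st$ entries are pairwise distinct. We may assume $s \le t$. The case $s = 1$ is immediate and gives a bound independent of $r$: in a single row, avoiding a monochromatic $K_{1,t}$ means no color occurs $t$ times, while avoiding a rainbow $K_{1,t}$ means at most $t-1$ distinct colors occur, and together these force $n \le (t-1)^2$. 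This also calibrates the target for $s \ge 2$: the ``color by row'' coloring $c(a,b) = a$ uses $r$ colors, contains neither a constant nor an all-distinct $2 \times 2$ submatrix, and shows that $n$ must exceed roughly $r$ before either configuration is unavoidable, so a linear bound is the right order.

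For $s \ge 2$ I would run a two-pronged argument. The first prong handles the monochromatic side via the K\H{o}v\'{a}ri--S\'{o}s--Tur\'{a}n theorem: letting $G_c$ be the bipartite graph of $c$-colored edges, if $G_c$ contains no $K_{s,t}$ then $e(G_c) = O_{s,t}(n^{2 - 1/s})$. Thus, absent any monochromatic $K_{s,t}$, every color class is sparse in this sense, and since the $n^2$ edges are distributed among at most $r$ classes, the remaining leverage must come entirely from the rainbow condition. The second prong is a structural lemma describing colorings with no rainbow $K_{s,t}$. The model computation is $s = t = 2$: fixing two columns, each row contributes either an unordered pair of colors or a singleton (when its two entries agree), and a rainbow $K_{2,2}$ on these columns is exactly a pair of rows whose color-pairs are disjoint; forbidding this forces the family of pairs to be pairwise intersecting, hence, for $2$-element sets, either to share a common color or to lie inside a fixed $3$-set. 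I would generalize this to show that for every choice of $s$ rows the columns split into boundedly many ``color patterns,'' so that all but a bounded-per-color set of columns is governed by few colors.

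To combine the two prongs I would peel off structure one color (or one row) at a time, using the rainbow lemma to argue that each color can be responsible for only boundedly many rows or columns before forcing a repetition which, together with the sparsity from the first prong, yields a monochromatic $K_{s,t}$; summing these bounded contributions over the $r$ colors gives $n \le C_{s,t} r$. I expect to organize this as an induction on $s$, peeling a single row $v$: after passing to a color class of $v$ of size at least $n/r$, one seeks a $K_{s-1,t}$ among the remaining rows restricted to those columns, reducing $C_{s,t}$ to $C_{s-1,t}$ plus an additive constant and so preserving linearity in $r$ across the finitely many inductive steps.

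The hard part will be the coupling between the two forbidden configurations. The monochromatic peeling wants a monochromatic $K_{s-1,t}$ in one \emph{specific} color, whereas the induction only delivers ``monochromatic or rainbow,'' and a rainbow $K_{s-1,t}$ does not extend to a rainbow $K_{s,t}$ by appending a monochromatic row. Consequently the main obstacle is making the rainbow structural lemma strong enough to force a genuinely \emph{linear} bound rather than the polynomial $n = O(r^s)$ that the K\H{o}v\'{a}ri--S\'{o}s--Tur\'{a}n estimate yields on its own, while simultaneously respecting the monochromatic constraint. Converting the local, per-$s$-set intersecting-family structure into a single global linear bound, and controlling the interaction between the monochromatic and rainbow conditions throughout, is where the real work lies.
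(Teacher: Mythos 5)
There is a genuine gap here, and you have in fact named it yourself: the ``second prong'' (a structural description of rainbow-$K_{s,t}$-free colorings) and the coupling step are left as acknowledged obstacles, and they are real ones. The induction you propose does not close: peeling a row and passing to a color class of size $n/r$ only shrinks by a factor of $r$ once, but as you note, the inductive hypothesis returns ``monochromatic \emph{or} rainbow'' $K_{s-1,t}$, and the rainbow outcome is useless for extension. Meanwhile, classifying rainbow-free colorings via intersecting families (your $s=t=2$ model computation) does not obviously scale: the ``boundedly many color patterns'' claim is exactly the kind of global structure theorem that is hard to prove, and your plan gives no route to it. So the proposal, as written, is a program with its central lemma missing, not a proof.

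The idea you are missing --- and it is the paper's key move --- is that no structural analysis of rainbow-free colorings is needed at all. Monochromatic-freeness alone, via K\H{o}v\'{a}ri--S\'{o}s--Tur\'{a}n (in the Zarankiewicz form: if $m$ vertices of $U$ each have $\geq d$ edges of one color and that color class is $K_{s,t}$-free, then the neighborhood on the other side has size $> \left(\frac{m}{s-1}\right)^{1/t}(d - t + 1)$, which exceeds $n$ for suitable $m,d$), shows that at most $mr$ vertices on each side can be incident to $d$ or more edges of any single color, where $m, d$ are chosen with $d = \Theta_{s,t}(r)$ and $m = \Theta_{s,t}(1)$. Delete those exceptional vertices; every surviving vertex has maximum color degree less than $d$. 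Now pick $s$ vertices of $U'$ and $t$ vertices of $V'$ uniformly at random: for any two of the $st$ edges among them, the probability they share a color is at most $d/(n - mr)$, so by the union bound the probability that the copy of $K_{s,t}$ is \emph{not} rainbow is at most $\binom{st}{2}\frac{d}{n-mr}$, which is less than $1$ once $n = C_{s,t} r$ with $C_{s,t}$ large enough. Hence a rainbow copy exists. In other words, the two forbidden configurations are not coupled by a delicate structural argument but by a degree bound: forbidding monochromatic $K_{s,t}$ forces almost all vertices to have small color degree, and small color degree makes a random $K_{s,t}$ rainbow. Your first prong is the right tool pointed in slightly the wrong direction (total sparsity of color classes rather than a max-color-degree bound after deleting $O(r)$ vertices), and your second prong should be replaced by this one-line probabilistic argument.
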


This bound is tight up to the constant factor, as the graph $K_{(t-1)r, (t-1)r}$ on vertices $u_1,$ $\dots$, $u_{(t-1)r}$,$v_1$,$\dots$, $v_{(t-1)r}$ can be $r$-colored with no monochromatic or rainbow $K_{s,t}$ by coloring edge $u_iv_j$ with color $\lceil i/(t-1)\rceil$.  We will determine a constant which is exponential in $t$, but a more careful analysis can determine tighter bounds for the specific cases when $s = 1, 2$:

\begin{theorem}[also proven in \cite{eroh04}]\label{thm:star}
    Let $r$ be an integer, and let $n = (p-1)(q-1) + 1$.  Then, every $r$-coloring of $K_{1, n}$ contains a rainbow copy of $K_{1, p}$ or a monochromatic copy of $K_{1,q}$.
\end{theorem}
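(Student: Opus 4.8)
The plan is to reduce the claim to a single pigeonhole count on the multiset of edge-colors. First I would record the basic structural observation: since every edge of the star $K_{1,n}$ is incident to the common central vertex, a coloring of its edges is nothing more than an assignment of one of $r$ colors to each of the $n$ leaf-edges, and \emph{any} subset of $k$ of these edges already forms a copy of $K_{1,k}$. Consequently the subgraph structure imposes no constraint beyond the colors themselves: a rainbow $K_{1,p}$ exists precisely when at least $p$ distinct colors appear among the edges (choose one edge from each of $p$ distinct color classes), and a monochromatic $K_{1,q}$ exists precisely when some single color is used on at least $q$ edges.

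With this translation in hand, I would argue by contraposition. Suppose a given $r$-coloring contains neither a rainbow $K_{1,p}$ nor a monochromatic $K_{1,q}$. By the two equivalences above, the absence of the former means that at most $p-1$ distinct colors occur, while the absence of the latter means that each color class contains at most $q-1$ edges. Multiplying these two bounds shows that the total number of edges is at most $(p-1)(q-1)$.

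Finally I would conclude by comparing with the hypothesis $n = (p-1)(q-1)+1$: the star then has strictly more edges than the maximum $(p-1)(q-1)$ permitted by any coloring that avoids both configurations, a contradiction. Hence every $r$-coloring of $K_{1,n}$ must realize at least one of the two substars, as claimed.

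I expect no genuine obstacle here, since the content is an elementary pigeonhole estimate. The only point demanding a moment of care is the reformulation in the first step: one must check that ``rainbow $K_{1,p}$'' and ``monochromatic $K_{1,q}$'' are genuinely equivalent to the purely color-theoretic statements ``at least $p$ colors are used'' and ``some color is used at least $q$ times,'' which hinges on the fact that in a star every edge set of the appropriate size is automatically a valid substar. It is also worth remarking that the resulting bound is independent of $r$: when $r \le p-1$ no rainbow $K_{1,p}$ can occur at all, and the statement degenerates to the standard pigeonhole for monochromatic stars, which the same count recovers.
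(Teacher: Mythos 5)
Your proof is correct and is essentially the same as the paper's: both reduce the problem to a pigeonhole count on the colors of the edges at the center vertex, with your contrapositive formulation (at most $p-1$ colors times at most $q-1$ edges per color gives at most $(p-1)(q-1) < n$ edges) being just a rephrasing of the paper's direct case split. No issues.
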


\begin{theorem}\label{thm:K_2t}
    Let $r$ be a positive integer, and $t \geq 2$.  Let \begin{align*}
    n_1 &= (6(t-1)-1)r+2,\\
    n_2 &= 2(t-1)\binom{6(t-1)}{2} + 3(t-1)(r+1)+1.
\end{align*}  Then, any coloring of $K_{n_1, n_2}$ with $r$ colors contains a monochromatic $K_{2,t}$ or a rainbow $K_{2,t}$.
\end{theorem}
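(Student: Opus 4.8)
The plan is to set $A,B$ as the two sides of $K_{n_1,n_2}$ with $|A|=n_1$, $|B|=n_2$, and to argue by contradiction, assuming the coloring has neither a monochromatic nor a rainbow $K_{2,t}$; here I use that every copy of $K_{2,t}$ places its two-vertex part entirely in $A$ or entirely in $B$, so I am free to search for either orientation. The first step is a pigeonhole on the small side: since $n_1=(6(t-1)-1)r+2$, each vertex $b\in B$ sends more than $(6(t-1)-1)r$ edges into $A$, so some ``dominant'' color $\phi(b)$ occurs on a set $S_b\subseteq A$ with $|S_b|\ge 6(t-1)$; the extra ``$+2$'' is kept as slack so the same bound survives after deleting one distinguished vertex of $A$.

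The second step is to translate the two forbidden structures into constraints attached to a fixed pair. For a pair $\{a_1,a_2\}\subseteq A$ I look at the multigraph on the color set $[r]$ having one edge $\{c_b(a_1),c_b(a_2)\}$ for each $b\in B$ (a loop when the two colors coincide). A monochromatic $K_{2,t}$ with its pair at $\{a_1,a_2\}$ is exactly a color carrying $\ge t$ loops, and a rainbow $K_{2,t}$ with its pair at $\{a_1,a_2\}$ is exactly a matching of size $t$ among the non-loop edges. Hence, under the contradiction hypothesis, (i) the agreement set $\{b:c_b(a_1)=c_b(a_2)\}$ has at most $(t-1)r$ elements, since otherwise pigeonholing its common colors gives a monochromatic $K_{2,t}$; and (ii) by the classical bound $\tau\le 2\nu$ relating the vertex-cover and matching numbers, a multigraph with no matching of size $t$ has a vertex cover of size at most $2(t-1)$, i.e. a color set $W_{a_1,a_2}$ with $|W_{a_1,a_2}|\le 2(t-1)$ meeting $\{c_b(a_1),c_b(a_2)\}$ for every disagreeing $b$. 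The same reformulation applies verbatim with the roles of $A$ and $B$ exchanged.

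I then combine these with the dominant-color data. One clean consequence is immediate: if any pair $\{a_1,a_2\}$ lies in $S_b$ for more than $(t-1)r$ vertices $b$, then those $b$ all agree on $\{a_1,a_2\}$ and pigeonholing their dominant colors produces a monochromatic $K_{2,t}$; so I may assume every pair of $A$ sits in at most $(t-1)r$ of the sets $S_b$. The plan is now to double count. Grouping the $b$'s by dominant color and using that two vertices sharing a dominant color have near-disjoint monochromatic neighborhoods (else a monochromatic $K_{2,t}$ with pair in $B$), each dominant neighborhood of size $6(t-1)=3\cdot 2(t-1)$ leaves at least $5(t-1)$ vertices outside any partner's neighborhood; feeding these through a cover $W$ of size $2(t-1)$ forces heavy repetition of a non-dominant color. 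Bounding how many vertices $b$ can be ``exceptional'' with respect to a fixed dominant neighborhood yields the budget $2(t-1)\binom{6(t-1)}{2}$ (at most $2(t-1)$ exceptions for each of the $\binom{6(t-1)}{2}$ pairs inside the neighborhood); after discarding them, the surviving $3(t-1)(r+1)+1$ vertices are meant to admit a pair on which more than $(t-1)r$ agree, or enough fresh colors to complete a rainbow matching, giving the contradiction.

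The main obstacle is precisely this last combination. The two reductions pull in opposite directions: forbidding monochromatic copies forces the dominant neighborhoods to be spread thinly across $A$ (at most $(t-1)r$ per pair, and near-disjoint within a color class), whereas forbidding rainbow copies forces every pair's disagreements to be concentrated on a cover of only $2(t-1)$ colors---and a constant average degree in any single color class is far too sparse for a naive K\H{o}v\'ari--S\'os--Tur\'an count to bite. The delicate point is to show these constraints are globally incompatible once $|B|$ exceeds the stated threshold, by charging each vertex of $B$ either to an agreement (capped at $(t-1)r$ per pair) or to one of the $\binom{6(t-1)}{2}$ pairs of a dominant neighborhood through a cover color (capped at $2(t-1)$ per pair); making this charging scheme lossless, so that the constants $6(t-1)$ and $2(t-1)\binom{6(t-1)}{2}$ come out exactly, is where the real work lies, and where Theorem~\ref{thm:star} is used to control the color profiles of the distinguished vertices.
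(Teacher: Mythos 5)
Your first two reductions are sound, and they partially coincide with the paper's own machinery: the agreement bound of $(t-1)r$ per pair and the $2(t-1)$-color cover $W_{a_1,a_2}$ are exactly what the paper extracts from the two-edge paths $\pi_k = u_iv_ku_j$ (its maximal family $\mathcal P$ of pairwise color-disjoint rainbow paths has $|\mathcal P|\leq t-1$, and the $2|\mathcal P|$ colors appearing on $\mathcal P$ are precisely a vertex cover of your multigraph, so your $\tau\le 2\nu$ observation is the same fact in different language). But from that point on the proposal is only a plan, and you say so yourself: the charging scheme that is supposed to produce the contradiction is never carried out, and ``making this charging scheme lossless \ldots is where the real work lies.'' That unfinished combination step is the entire content of the theorem, so this is a genuine gap rather than a presentational one.

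The missing idea is a pivot you never make. From the cover, pigeonhole the at least $n_2-(t-1)(r+1)$ disagreeing vertices of $B$ over the at most $2(t-1)$ cover colors, and then over the two endpoints: one of $a_1,a_2$ must be incident to at least $\frac{n_2-(t-1)(r+1)}{4(t-1)}$ edges of a \emph{single} color. Consequently all but at most one vertex of the \emph{small} side $A$ carries a monochromatic star into $B$ of size linear in $r$ (roughly $r/2$). Pigeonholing these $n_1-1=(6(t-1)-1)r+1$ vertices over the $r$ colors yields $6(t-1)$ vertices of $A$ whose large stars all have the same color; their pairwise intersections have size at most $t-1$ (else a monochromatic $K_{2,t}$), so inclusion--exclusion forces the union to have size at least $6(t-1)\cdot\frac{n_2-(t-1)(r+1)}{4(t-1)}-\binom{6(t-1)}{2}(t-1)=n_2+\frac12>n_2$, contradicting that these stars live inside $B$. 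This is exactly where the constants $6(t-1)$ and $2(t-1)\binom{6(t-1)}{2}$ come from. Your dominant sets $S_b$ cannot substitute for this: they are constant-size ($6(t-1)$) stars centered on the \emph{large} side, and no counting over constant-size subsets of $A$ can overflow anything of order $n_2\approx 3(t-1)r$; the contradiction needs same-colored stars whose size grows with $r$, centered on the small side. Also, Theorem~\ref{thm:star} plays no role in the argument, so invoking it to ``control the color profiles'' signals that the plan, as stated, has not been thought through to an actual contradiction.
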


And the corresponding Euclidean Gallai-Ramsey results are:

\begin{theorem}\label{thm:simplex_product}
    For any $s,t\in\mathbb{N}$ and $a,b\in\mathbb{R}$, let $Q_s$ be the Cartesian product of a regular $(s-1)$-simplex with side length $a$ and $Q_t$ a regular $(t-1)$-simlpex with side length $b$. Then there is a constant $C_{s,t}$ such that $\mathbb{E}^{C_{s,t}r}\xrightarrow{r}(Q_s,Q_t)_{\GR}$.
\end{theorem}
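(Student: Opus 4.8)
The plan is to realize the graph Gallai--Ramsey result of Theorem~\ref{thm:K_st} inside Euclidean space through an explicit isometric embedding, following and adapting the strategy of \cite{mao2022euclidean}. Write $Q = Q_s \ast Q_t$ for the Cartesian product of a regular $(s-1)$-simplex of side $a$ with a regular $(t-1)$-simplex of side $b$; this configuration has exactly $st$ points. I would fix an integer $n$ (to be chosen) and identify the two parts of $K_{n,n}$ with vertex sets $U = \{u_1,\dots,u_n\}$ and $V = \{v_1,\dots,v_n\}$. Place $p_i = \tfrac{a}{\sqrt 2}\, e_i$ and $q_j = \tfrac{b}{\sqrt 2}\, f_j$, where the $e_i$ and $f_j$ are standard basis vectors of two orthogonal copies of $\mathbb{R}^n$ inside $\mathbb{E}^{2n}$; then $\|p_i - p_{i'}\| = a$ and $\|q_j - q_{j'}\| = b$ for all $i \neq i'$, $j \neq j'$. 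Define the edge embedding $\phi(u_i v_j) = (p_i, q_j) \in \mathbb{E}^{2n}$.

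The key observation is that for $S \subseteq U$, $T \subseteq V$ with $|S| = s$, $|T| = t$, the images of the edges of the induced $K_{s,t}$ are exactly $\{p_i : u_i \in S\} \times \{q_j : v_j \in T\}$, whose pairwise distances equal $\sqrt{\|p_i - p_{i'}\|^2 + \|q_j - q_{j'}\|^2}$, a point set congruent to $Q$. Since $\phi$ is injective and carries the $st$ edges of this $K_{s,t}$ bijectively to the $st$ points of a congruent copy of $Q$, a monochromatic (resp.\ rainbow) $K_{s,t}$ of this form maps to a monochromatic (resp.\ rainbow) copy of $Q$. Given an arbitrary $r$-coloring $c$ of $\mathbb{E}^{2n}$, I would pull it back to an edge coloring $\tilde c$ of $K_{n,n}$ via $\tilde c(u_i v_j) = c(\phi(u_i v_j))$, which uses at most $r$ colors.

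Applying Theorem~\ref{thm:K_st} with parameter $m := \max(s,t)$ and $n = C_{m,m} r$ produces a monochromatic or rainbow $K_{m,m}$ in $\tilde c$. Selecting $s$ of its $U$-vertices and $t$ of its $V$-vertices yields an induced $K_{s,t}$ in the intended orientation that is still monochromatic (all edges one color) or rainbow (a subset of $st \leq m^2$ distinct colors); under $\phi$ this is a monochromatic or rainbow copy of $Q$ in $c$. Hence $\mathbb{E}^{2 C_{m,m} r} \xrightarrow{r} (Q, Q)_{\GR}$, and one may take $C_{s,t} = 2\,C_{\max(s,t),\max(s,t)}$.

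The main obstacle is the orientation issue that forces the detour through $K_{m,m}$: a naive appeal to Theorem~\ref{thm:K_st} only yields \emph{some} monochromatic or rainbow $K_{s,t}$, which may sit in the bipartition with its $s$ vertices in $V$, producing the ``wrong'' product $Q_t^{(a)} \ast Q_s^{(b)}$ that need not be congruent to $Q$ when $a \neq b$ and $s \neq t$. Passing to a monochromatic or rainbow $K_{m,m}$ and choosing the orientation by hand eliminates this at the cost of only a constant-factor increase in the dimension. A secondary, routine point is the verification that the axis-aligned product is genuinely a congruent copy of $Q$ in the Euclidean Ramsey sense (allowing arbitrary isometries), which is immediate from the distance computation above; if a sharper constant is wanted, one can additionally project each factor onto the hyperplane $\sum_i x_i = \text{const}$ to reduce the ambient dimension from $2n$ to $2n-2$.
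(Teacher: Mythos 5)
Your proposal is correct and follows essentially the same route as the paper: your point set $\{(p_i,q_j)\}$ is exactly the paper's $Q_{n,a}\ast Q_{n,b}$, your edge map is its $\phi$, and pulling an $r$-coloring of $\mathbb{E}^{2n}$ back to an edge coloring of $K_{n,n}$ and invoking Theorem \ref{thm:K_st} is precisely the content of the paper's Lemma \ref{translation}. The one substantive difference is your detour through a monochromatic or rainbow $K_{m,m}$ with $m=\max(s,t)$, and this is a genuine (if small) improvement in rigor over the paper's own treatment: Theorem \ref{thm:K_st}, used as a black box, only yields a copy of $K_{s,t}$ as an abstract subgraph, which may have its $s$-class on either side of the bipartition, and when $s\neq t$ and $a\neq b$ the flipped copy maps under $\phi$ to the product of a $(t-1)$-simplex of side $a$ with an $(s-1)$-simplex of side $b$, which is not congruent to the target configuration. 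The paper's Lemma \ref{translation} silently assumes the correctly oriented copy. Your fix -- passing to the symmetric $K_{m,m}$, where orientation is irrelevant, and then choosing $s$ vertices on the $U$-side and $t$ on the $V$-side (monochromaticity and rainbowness both being inherited by subgraphs) -- repairs this at only a constant-factor cost in the dimension, so your conclusion $\mathbb{E}^{2C_{m,m}r}\xrightarrow{r}(Q,Q)_{\GR}$ with $C_{s,t}=2\,C_{\max(s,t),\max(s,t)}$ is valid, and the constant-factor loss is immaterial to the statement being proved.
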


\begin{theorem}\label{thm:simplex}
    For $p,q\in\mathbb{N}$ and $b\in\mathbb{R}$, let $\Delta_{p}$ be a $p$-dimensional regular simplex with side length $b$ and $\Delta_{q}$ a $q$-dimensional regular simplex with side length $b$. Then for any $r\in\mathbb{N}$, $\mathbb{E}^{pq+2}\xrightarrow{r}(\Delta_p;\Delta_q)_{\GR}$.
\end{theorem}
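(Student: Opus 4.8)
The plan is to deduce Theorem~\ref{thm:simplex} from the star case of the bipartite Gallai--Ramsey problem, namely Theorem~\ref{thm:star}, by exploiting the special feature of simplices that makes them far easier to handle than rectangles: \emph{every} subset of the vertices of a regular simplex is again a regular simplex of the same side length. Concretely, if $S$ is a regular $m$-simplex of side length $b$ with vertex set $V$ (so $|V| = m+1$), then any $k+1$ points of $V$ span a regular $k$-simplex of side length $b$. Thus a set of $p+1$ vertices of $V$ receiving pairwise distinct colors is automatically a rainbow $\Delta_p$, and a set of $q+1$ vertices of $V$ of a common color is automatically a monochromatic $\Delta_q$. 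This reduces the geometric question to a pure coloring question about the vertices of one large simplex, which is exactly the content of the star ($s=1$) specialization of the $K_{s,t}$ translation.

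To build the map, set $N = pq+1$ and consider the star $K_{1,N}$ with center $u$ and leaves $w_1,\dots,w_N$. Following the embedding style of \cite{mao2022euclidean}, I would realize the single center vertex as the point $\mu e_0 \in \mathbb{E}^1$ and the $N$ leaves as the scaled basis vectors $\lambda e_1,\dots,\lambda e_N \in \mathbb{E}^N$, so that each edge $uw_j$ is sent to the point $P_j = (\mu e_0, \lambda e_j) \in \mathbb{E}^{1+N} = \mathbb{E}^{pq+2}$. Since $\lVert P_i - P_j\rVert = \lambda\sqrt{2}$ for $i \neq j$, choosing $\lambda = b/\sqrt{2}$ makes $\{P_1,\dots,P_N\}$ a regular $(pq)$-simplex of side length $b$, all of whose sub-simplices again have side length $b$.

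Given an arbitrary $r$-coloring $\chi$ of $\mathbb{E}^{pq+2}$, I would pull it back to an $r$-edge-coloring of $K_{1,N}$ by coloring each edge $uw_j$ with $\chi(P_j)$. Applying Theorem~\ref{thm:star} with its parameters $p,q$ replaced by $p+1,q+1$ --- which requires exactly $N = ((p+1)-1)((q+1)-1)+1 = pq+1$ leaves --- yields either a rainbow $K_{1,p+1}$ or a monochromatic $K_{1,q+1}$. In the first case the corresponding $p+1$ points $P_j$ carry distinct colors and span a regular $\Delta_p$, giving a rainbow $\Delta_p$; in the second case the corresponding $q+1$ points share a color and span a regular $\Delta_q$, giving a monochromatic $\Delta_q$. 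Either way the desired configuration appears, establishing $\mathbb{E}^{pq+2}\xrightarrow{r}(\Delta_p;\Delta_q)_{\GR}$.

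The main point to emphasize is that, unlike the rectangle case, there is no rigidity obstruction to overcome: because $\Delta_p$ and $\Delta_q$ share the side length $b$ and every vertex subset of a regular simplex is a regular simplex, the translation preserves exactly the configurations we need, and the only genuine content is the dimension bookkeeping above together with the pigeonhole underlying Theorem~\ref{thm:star} (at most $p$ colors avoids a rainbow $\Delta_p$, and then some color must repeat $q+1$ times among the $pq+1$ vertices). The one thing to check carefully is that the side-length scaling holds simultaneously for all sub-simplices, which is automatic here precisely because the two simplices have equal side lengths. I note finally that the stated dimension is not optimal: since the image simplex already lies in a $pq$-dimensional affine subspace, a direct pigeonhole on the $pq+1$ vertices of a single regular $(pq)$-simplex already gives the sharper $\mathbb{E}^{pq}\xrightarrow{r}(\Delta_p;\Delta_q)_{\GR}$, the extra two dimensions reflecting only the uniform basis-vector embedding convention inherited from the general method.
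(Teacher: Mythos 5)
Your proof is correct and follows essentially the same route as the paper: the paper obtains this theorem by applying Theorem \ref{thm:star} with parameters $p+1, q+1$ (so $n = pq+1$) and then invoking Lemma \ref{translation}, whose embedding, specialized to $K_{1,pq+1}$, is exactly your map $uw_j \mapsto P_j$ into $\mathbb{E}^{pq+2}$. Your closing observation that a direct pigeonhole on the $pq+1$ vertices of a single regular $pq$-simplex already gives the sharper bound $\mathbb{E}^{pq}\xrightarrow{r}(\Delta_p;\Delta_q)_{\GR}$ is also correct and improves on the paper's stated dimension, since the two extra coordinates come only from the uniform embedding convention and are not needed here.
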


\begin{theorem}\label{thm:prism_polytope}
    For $t,r\in \mathbb{N}$ and $a,b\in \mathbb{R}$, let $Q_{t}$ be the right prismatic polytope obtained by taking the Cartesian product of a line segment with length $a$ and a $(t-1)$-dimensional regular simplex with side length $b$. For any dimension $t$ and number of colors $r$, let 
    \[d = (6(t-1)-1)r+ 3(t-1)(r+1)+2(t-1)\binom{6(t-1)}{2} +3.\]Then, $\mathbb{E}^{d}\xrightarrow{r}(Q_{t},Q_{t})_{\GR}$. 
\end{theorem}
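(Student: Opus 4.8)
The plan is to mirror the reduction used for Theorem~\ref{thm:simplex_product}, but to feed it the sharper bipartite bound of Theorem~\ref{thm:K_2t} in place of the generic constant $C_{s,t}$. Concretely, I would build an explicit injection $\phi$ from the edge set of $K_{n_1,n_2}$ into $\mathbb{E}^{d}$ under which every $K_{2,t}$ subgraph is sent to a point set congruent to $Q_t$, and then pull back an arbitrary $r$-coloring of $\mathbb{E}^{d}$ to an $r$-coloring of the edges of $K_{n_1,n_2}$. Since $Q_t = L \ast \Delta$ is the Cartesian product of a segment $L$ of length $a$ with a regular $(t-1)$-simplex $\Delta$ of side $b$, the natural map respects exactly the bipartite product structure of a $K_{2,t}$.

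For the construction, label the two parts of $K_{n_1,n_2}$ as $\{u_1,\dots,u_{n_1}\}$ and $\{v_1,\dots,v_{n_2}\}$ and realize them as two regular simplices via scaled standard basis vectors: set $p_i = \tfrac{a}{\sqrt{2}}e_i \in \mathbb{E}^{n_1}$ and $q_j = \tfrac{b}{\sqrt{2}}f_j \in \mathbb{E}^{n_2}$, where $\{e_i\}$ and $\{f_j\}$ are orthonormal bases of the two factors. Any two of the $p_i$ are then at distance $a$ and any two of the $q_j$ at distance $b$. Define $\phi(u_iv_j) = (p_i,q_j) \in \mathbb{E}^{n_1}\times\mathbb{E}^{n_2} = \mathbb{E}^{d}$ with $d = n_1+n_2$, which one checks equals the displayed value in the statement. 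For a $K_{2,t}$ on $\{u_{i_1},u_{i_2}\}$ and $\{v_{j_1},\dots,v_{j_t}\}$, the $2t$ image points $(p_{i_\alpha},q_{j_\beta})$ have pairwise distances $a$ (same $\beta$), $b$ (same $\alpha$), and $\sqrt{a^2+b^2}$ (otherwise), so they form a copy of $L\ast\Delta = Q_t$. A direct computation of these three distance types is the only verification required, and it certifies the congruence exactly rather than merely combinatorially.

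With $\phi$ in hand, fix any $r$-coloring $c$ of $\mathbb{E}^{d}$ and color each edge $u_iv_j$ of $K_{n_1,n_2}$ by $c(\phi(u_iv_j))$; this is well defined because $\phi$ is injective on edges. By Theorem~\ref{thm:K_2t}, applied with the stated $n_1,n_2$, this edge-coloring contains a monochromatic or rainbow $K_{2,t}$. Its image under $\phi$ is then a monochromatic (respectively rainbow) copy of $Q_t$ in $\mathbb{E}^{d}$, which is precisely the conclusion $\mathbb{E}^{d}\xrightarrow{r}(Q_t,Q_t)_{\GR}$.

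The step I expect to be most delicate is matching the two-sided structure of the bipartite copy to the asymmetric factors of $Q_t$: a $K_{2,t}$ inside $K_{n_1,n_2}$ could a priori place its two-vertex class in either part, and if it lands in the wrong part the image is a prism with the roles of $a$ and $b$ exchanged, which fails to be congruent to $Q_t$ when $a\neq b$. I would resolve this by reading off from the proof of Theorem~\ref{thm:K_2t} which part supplies the two-vertex class and which supplies the $t$-vertex class, and then assigning side length $a$ to the simplex on the former and $b$ to the simplex on the latter; the choice of $n_1$ linear in $(6(t-1)-1)r$ against the much larger $n_2$ strongly suggests the two-class lies in the $n_1$-part. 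A secondary bookkeeping point is confirming that $n_1+n_2$ indeed equals the displayed $d$, together with the monotonicity $\mathbb{E}^{m}\xrightarrow{r}(Q_t,Q_t)_{\GR}\Rightarrow\mathbb{E}^{m'}\xrightarrow{r}(Q_t,Q_t)_{\GR}$ for $m'\ge m$, so that any slack gained by instead embedding the two simplices optimally in $n_1+n_2-2$ dimensions only strengthens the bound.
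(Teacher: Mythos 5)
Your proposal is correct and is essentially the paper's own proof: the paper packages your embedding $\phi$ and coloring-pullback argument as Lemma~\ref{translation} (using the same scaled-standard-basis point set $Q_{n_1,a}\ast Q_{n_2,b}$) and applies it with Theorem~\ref{thm:K_2t}, exactly as you do, with $d = n_1 + n_2$. The orientation subtlety you flag is real and your resolution is the right one --- the proof of Theorem~\ref{thm:K_2t} always places the two-vertex class in the $n_1$-part (both the monochromatic and rainbow copies arise as $u_i, u_j, v_{k_1},\dots,v_{k_t}$) --- and in fact your treatment here is more careful than the paper's, since Lemma~\ref{translation} silently assumes this orientation when asserting that the image of the bipartite copy is congruent to $S_1$ or $S_2$.
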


For $t = 2$, Theorem \ref{thm:prism_polytope} implies that if $K$ is a rectangle (i.e., the Cartesian product of a line segment and a one-simplex), then
\[E^{8r+36} \xrightarrow{r} (K, K)_{GR},\]
which is a constant factor improvement over (\ref{eq:known_bound}).

The remainder of this paper is split into three sections. In Section \ref{sec:bipartite}, we present proofs of our bipartite graph Gallai-Ramsey results. In Section \ref{sec:euclidean}, we utilize these bipartite graph Gallai-Ramsey results and prove our Euclidean Gallai-Ramsey results. Finally, we discuss some future work and interesting further problems in Section \ref{sec:future}.

\section{Bipartite Gallai-Ramsey Numbers}
\label{sec:bipartite}

In this section, we present proofs of our main results on bipartite Gallai-Ramsey numbers. 

\subsection{Proof of Theorem \ref{thm:K_st}}

The proof of Theorem \ref{thm:K_st} will require some machinery from extremal graph theory. Define the \textit{Zarankiewicz number} $z(m, n; s, t)$ to be the maximum possible number of edges in a bipartite graph $G = (U \sqcup V, E)$, where $|U| = m$ and $|V| = n$, that does not contain $K_{s,t}$ as a subgraph. The following upper bound for $z(m, n; s, t)$ was first established by K\H{o}vari, S\'{o}s, and Tur\'{a}n:
\begin{theorem}[\cite{bollobas}]\label{lem:zarankiewicz}
    For positive integers $m, n, s, t$:
    $$z(m, n; s, t) < (s-1)^{1/t}(n-t+1)m^{1-1/t}+(t-1)m.$$
\end{theorem}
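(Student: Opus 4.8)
The plan is to prove the bound by the classical double-counting argument of K\H{o}vari, S\'{o}s, and Tur\'{a}n, organized around counting ``stars'' whose central vertex lies in the part $U$ of size $m$ and whose $t$ leaves lie in the part $V$ of size $n$. Write $e = |E|$ for the number of edges, and for $u \in U$ let $d(u)$ denote its degree, so that its neighborhood $N(u)$ is a subset of $V$. The first step is to observe that $\sum_{u \in U}\binom{d(u)}{t}$ counts exactly the pairs $(u, T)$ with $T \subseteq N(u)$ and $|T| = t$; summing instead over the $t$-subsets $T \subseteq V$, the same quantity equals $\sum_{T}(\text{number of common neighbors of } T \text{ in } U)$. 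Since $G$ contains no $K_{s,t}$, no $t$-subset $T \subseteq V$ can have $s$ or more common neighbors in $U$, so each term is at most $s-1$, giving
\[
\sum_{u \in U}\binom{d(u)}{t} \le (s-1)\binom{n}{t}.
\]

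Next I would pass to the average degree $\bar{d} = e/m$. Using convexity of $x \mapsto \binom{x}{t}$ (for $x \ge t-1$) together with Jensen's inequality, the left-hand side is at least $m\binom{\bar{d}}{t}$, so $m\binom{\bar{d}}{t} \le (s-1)\binom{n}{t}$. The regime of small average degree, $\bar{d} < t-1$, must be disposed of separately, but there the claimed bound holds trivially since its right-hand side already exceeds $(t-1)m$; thus I may assume $t - 1 \le \bar{d} \le n$, the upper estimate being immediate because $\bar{d}$ is an average of degrees into the $n$-element set $V$.

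The crux of the argument is to extract $\bar{d}$ cleanly from $\binom{\bar{d}}{t} \le \frac{s-1}{m}\binom{n}{t}$, and in particular to recover the exact factor $(n-t+1)$ rather than a cruder power of $n$. The key is to bound the ratio of binomials directly:
\[
\frac{\binom{\bar{d}}{t}}{\binom{n}{t}} = \prod_{i=0}^{t-1}\frac{\bar{d} - i}{n - i} \ge \left(\frac{\bar{d} - t + 1}{n - t + 1}\right)^{t}.
\]
This holds because, when $\bar{d} \le n$, each factor $\frac{\bar{d} - i}{n - i}$ is nonincreasing in $i$ (an elementary check: $\frac{d}{di}\frac{\bar{d} - i}{n - i}$ has the sign of $\bar{d} - n \le 0$), so every factor is at least the last one. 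Combining with the previous inequality yields $\left(\frac{\bar{d} - t + 1}{n - t + 1}\right)^{t} \le \frac{s-1}{m}$, whence $\bar{d} - t + 1 \le (s-1)^{1/t}(n-t+1)m^{-1/t}$. Multiplying through by $m$ gives $e \le (s-1)^{1/t}(n-t+1)m^{1-1/t} + (t-1)m$, which is the desired estimate; in the nondegenerate range $n \ge t$, $m \ge s$ the inequality is strict, since equality throughout would force $\bar{d} = n$, i.e.\ $G = K_{m,n}$, which contains $K_{s,t}$ and is therefore excluded.

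I expect the main obstacle to be handling the convexity step rigorously rather than the algebra. One must either restrict to the regime $\bar{d} \ge t-1$ where $\binom{x}{t}$ is genuinely convex (treating small average degree by the trivial bound above) or invoke the standard convex extension of $\binom{x}{t}$ to all $x \ge 0$; and one must verify the monotonicity of the factors $\frac{\bar{d} - i}{n-i}$, which is precisely where the hypothesis $\bar{d} \le n$ enters and without which the clean factor $(n-t+1)$ could not be obtained.
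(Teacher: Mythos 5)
The paper does not actually prove this statement: it is quoted from the literature (the K\H{o}vari--S\'os--Tur\'an bound, cited to Bollob\'as), and your argument is precisely the classical double-counting proof given there --- count pairs $(u,T)$ with $T$ a $t$-subset of $N(u)$, bound each $t$-set's common neighborhood by $s-1$, apply convexity of $x\mapsto\binom{x}{t}$, and extract $\bar d$ via the factor-by-factor estimate $\binom{\bar d}{t}/\binom{n}{t}\ge\bigl(\tfrac{\bar d-t+1}{n-t+1}\bigr)^{t}$. Your writeup is correct in the nondegenerate regime $2\le s\le m$, $2\le t\le n$ under which the theorem is stated in the cited source (outside of it the strict inequality can genuinely fail, e.g.\ $z(m,n;1,t)=(t-1)m$ for $n\ge t-1$, so your parenthetical on strictness should exclude $s=1$ and $t=1$ as well), and it matches the standard proof, so no further comparison is needed.
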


A slight rearrangement of this bound gives the following lemma:

\begin{lemma}\label{lem:size_bound}
    Let $H$ be a bipartite graph with vertex set $U \sqcup V$ such that at least $m$ vertices in $U$ have degree at least $k$.  If $H$ does not contain $K_{s,t}$ as a subgraph, then
    $$|V| > \left(\frac m {s-1}\right)^{1/t} (k-t+1).$$
\end{lemma}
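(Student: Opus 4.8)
The plan is to deduce Lemma~\ref{lem:size_bound} directly from the K\H{o}vari--S\'{o}s--Tur\'{a}n bound in Theorem~\ref{lem:zarankiewicz} by restricting attention to the high-degree vertices. First I would let $U' \subseteq U$ be a set of exactly $m$ vertices each having degree at least $k$ in $H$; such a set exists by hypothesis. Consider the induced bipartite subgraph $H'$ on $U' \sqcup V$. Since $H$ contains no $K_{s,t}$, neither does $H'$, so I may apply Theorem~\ref{lem:zarankiewicz} with the roles $m \mapsto |U'| = m$ and $n \mapsto |V|$, giving
\begin{equation*}
e(H') < (s-1)^{1/t}(|V|-t+1)\,m^{1-1/t} + (t-1)m.
\end{equation*}

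Next I would bound $e(H')$ from below. Each of the $m$ vertices in $U'$ has degree at least $k$ in $H$, and every edge incident to a vertex of $U'$ lands in $V$, so these edges all survive in $H'$; hence $e(H') \geq mk$. Chaining the two bounds gives
\begin{equation*}
mk < (s-1)^{1/t}(|V|-t+1)\,m^{1-1/t} + (t-1)m.
\end{equation*}
The goal is now purely algebraic: I would subtract $(t-1)m$ from both sides, divide through by $m^{1-1/t}$ (legitimate since $m>0$), and rearrange to isolate $|V|-t+1$, obtaining
\begin{equation*}
|V| - t + 1 > \frac{m^{1/t}(k-t+1)}{(s-1)^{1/t}} = \left(\frac{m}{s-1}\right)^{1/t}(k-t+1),
\end{equation*}
which yields the desired inequality $|V| > \left(\tfrac{m}{s-1}\right)^{1/t}(k-t+1)$ a fortiori.

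The main obstacle, such as it is, lies not in the structure of the argument but in the algebra: I must be careful that dividing by $m^{1-1/t}$ and combining the $mk$ and $(t-1)m$ terms produces exactly the factor $(k-t+1)$ claimed in the statement, rather than some off-by-one variant. Concretely, the combination $mk - (t-1)m = m(k-t+1)$ and the exponent arithmetic $m / m^{1-1/t} = m^{1/t}$ are what make the stated form come out cleanly, so those two simplifications are the steps that warrant explicit verification. I would also note the implicit nondegeneracy assumptions ($m \geq 1$ so that $U'$ is nonempty and division is valid) under which the manipulation is sound.
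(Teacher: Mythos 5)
Your proof is correct and follows essentially the same route as the paper: lower-bound the number of edges by $mk$, upper-bound it by the K\H{o}vari--S\'{o}s--Tur\'{a}n bound of Theorem~\ref{lem:zarankiewicz}, and rearrange, with the same algebra $mk-(t-1)m = m(k-t+1)$ and $m/m^{1-1/t}=m^{1/t}$. If anything, your explicit restriction to a set $U'$ of exactly $m$ high-degree vertices is slightly more careful than the paper's write-up, which applies $z(m,|V|;s,t)$ directly to $H$ even though $|U|$ may exceed $m$.
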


\begin{proof}
    Because $H$ does not contain $K_{s,t}$ as a subgraph, it can have at most $z(m, |V|; s, t)$ edges.  But because at least $m$ vertices in $U$ have degree at least $k$, $H$ must have at least $mk$ edges.  Therefore, by Theorem \ref{lem:zarankiewicz},
    $$mk \leq z(m, |V|; s, t) <  (s-1)^{1/t}(|V|-t+1)m^{1-1/t}+(t-1)m.$$
    Dividing through by $m\left(\frac {s-1} m\right)^{1/t}$ and then rearranging, we find
    $$\left(\frac m {s-1}\right)^{1/t}(k-t+1) < |V|-t+1 \leq |V|,$$
    as desired.
\end{proof}

We may use Lemma \ref{lem:size_bound} to control the number of edges of the same color that vertices in a monochromatic-$K_{s,t}$-free $K_{n,n}$ may be adjacent to.

\begin{lemma}\label{lem:adj_colors_1}
Suppose $K_{n, n}$ is colored with $r$ colors such that there is no monochromatic copy of $K_{s,t}$.  Let the vertex set of $K_{n, n}$ be $U \sqcup V$.  Suppose positive integers $m, d$ satisfy 
\[\left(\frac m {s-1}\right)^{1/t} (d-t+1) > n.\]
Then, fewer than $mr$ vertices in $U$ and fewer than $mr$ vertices in $V$ are adjacent to at least $d$ vertices of the same color.
\end{lemma}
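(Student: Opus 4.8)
The plan is to argue by contradiction using the pigeonhole principle together with Lemma \ref{lem:size_bound}. Suppose instead that at least $mr$ vertices in $U$ are each adjacent to at least $d$ vertices of some single color. For each such vertex, select one color in which it has degree at least $d$; call this its \emph{popular color}. Since there are only $r$ colors available and at least $mr$ vertices to distribute among them, by pigeonhole at least $m$ of these vertices must share the same popular color $c$.

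Next I would isolate the monochromatic subgraph $H_c$ consisting of exactly the color-$c$ edges of $K_{n,n}$, viewed as a bipartite graph on $U \sqcup V$. By construction, at least $m$ vertices of $U$ have degree at least $d$ in $H_c$. Moreover, since the original coloring contains no monochromatic $K_{s,t}$, the subgraph $H_c$ contains no $K_{s,t}$ either. Lemma \ref{lem:size_bound}, applied with $k = d$, then yields
\[
|V| > \left(\frac{m}{s-1}\right)^{1/t}(d-t+1).
\]
Since $|V| = n$, this contradicts the hypothesis $\left(\frac{m}{s-1}\right)^{1/t}(d-t+1) > n$ (both inequalities being strict). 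Hence fewer than $mr$ vertices in $U$ have this property; the identical argument with the roles of $U$ and $V$ exchanged handles the vertices of $V$.

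The argument is essentially routine once the two tools are in place, so there is no serious obstacle. The one point requiring care is the pigeonhole step: a vertex may have several popular colors, so I must fix a single choice per vertex before counting, ensuring that the $m$ vertices selected genuinely share one common color $c$ and thus all contribute degree at least $d$ to the \emph{same} subgraph $H_c$. The strictness of both the conclusion of Lemma \ref{lem:size_bound} and the hypothesis of this lemma is what makes the final comparison a clean contradiction rather than a boundary case.
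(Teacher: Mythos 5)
Your proof is correct and takes essentially the same approach as the paper: both arguments apply Lemma \ref{lem:size_bound} to the subgraph formed by the edges of a single color (which inherits $K_{s,t}$-freeness from the absence of a monochromatic $K_{s,t}$) and derive $n > \left(\frac{m}{s-1}\right)^{1/t}(d-t+1)$, contradicting the hypothesis. The only difference is organizational — the paper shows directly that for each fixed color fewer than $m$ vertices of $U$ can have at least $d$ edges of that color and then sums over the $r$ colors, whereas you run the contrapositive with an explicit pigeonhole step; these are logically interchangeable.
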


\begin{proof}
      Fix a color, say, red.  We will show that fewer than $m$ of the vertices in $U$ can be adjacent to at least $d$ red edges.  Indeed, suppose for contradiction that the vertices $u_1, u_2, \dots, u_m$ are each adjacent to at least $d$ red edges.  Let $\tilde U = \{u_1, \dots, u_m\}$ and $\tilde V = \{v_j \in V \mid u_iv_j \text{ is red for some } 1 \leq i \leq m\}$, and consider the subgraph $H$ of $K_{n,n}$ whose vertex set is $\tilde U \sqcup \tilde V$ and whose edges are all the red edges connecting $\tilde U$ to $\tilde V$.  Then, $|\tilde U| = m$, each vertex in $\tilde U$ has degree at least $d$, and $H$ cannot contain $K_{s,t}$ as a subgraph, otherwise we would find a red $K_{s,t}$.  Thus, by Lemma \ref{lem:size_bound},
    \begin{align*}
    \left|\tilde V\right| >\left(\frac m {s-1}\right)^{1/t} (d-t+1)>n.
    \end{align*}
     But this is a contradiction, since $\left|\tilde V \right| \leq |V| = n$.  Therefore, for each of the $r$ colors, fewer than $m$ of the vertices $u_1, \dots, u_n$ can be adjacent to at least $d$ edges of that color; thus, fewer than $mr$ of the vertices in $U$ and fewer than $mr$ of the vertices in $V$ can be adjacent to at least $d$ edges of any color.  The same logic applies in $V$.
\end{proof}

By using Lemma \ref{lem:adj_colors_1} to limit the number of edges of one color that any vertex is adjacent to, we can prove Theorem \ref{thm:K_st} using a simple probabilistic argument.

\begin{proofof}{Theorem \ref{thm:K_st}}
We claim that any $r$-coloring of $K_{n,n}$ for $n = 3(s-1)s^{2t}t^{2t}r$ which does not contain a monochromatic $K_{s,t}$ contains a rainbow $K_{s,t}$.  Suppose that an $r$-coloring of $K_{n,n}$ contains no monochromatic $K_{s,t}$.  Let $d = 4(s-1)s^{2(t-1)}t^{2(t-1)}r$ and $m = (s-1)s^{2t}t^{2t}$.  Then, $m$ and $d$ satisfy
\begin{align*}
\left(\frac m {s-1}\right)^{1/t} (d-t+1) &= \left(\frac {(s-1)s^{2t}t^{2t}} {s-1}\right)^{1/t} (4(s-1)s^{2(t-1)}t^{2(t-1)}r-t+1)\\
&> s^2t^2(3(s-1)s^{2(t-1)}t^{2(t-1)}r)\\
&= 3(s-1)s^{2t}t^{2t}\\
&= n.
\end{align*}
Therefore, by Lemma \ref{lem:adj_colors_2}, fewer than $mr$ vertices in $U$ and fewer than $mr$ vertices in $V$ are adjacent to at least $d$ vertices of the same color. Define
$$U' = \left\{u_i \in U \,\Big|\, u_i \text{ is adjacent to fewer than }d \text{ edges of each color}\right\},$$
$$V' = \left\{v_i \in V \,\Big|\, v_i \text{ is adjacent to fewer than }d \text{ edges of each color}\right\};$$
we have that $|U'|, |V'| > n - mr = 2(s-1)s^{2t}t^{2t}r$.

Now, choose a random collection of $s$ vertices $u_1,\dots, u_s$ in $U'$ and $t$ vertices $v_1,\dots, v_t$ in $V'$. We can bound the probability that any pair of edges among these vertices has the same color:

\begin{claim}
    For any $1\leq i_1, i_2 \leq s$ and $1\leq j_1, j_2 \leq t$, the probability that the edges $u_{i_1}v_{j_1}$ and $u_{i_2}v_{j_2}$ are the same color is at most $\frac{d}{n - mr}.$
\end{claim}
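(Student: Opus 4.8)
The plan is to prove the claim by revealing the color of one of the two edges, and then bounding the conditional probability that the second edge matches it, exploiting the defining property of $U'$ and $V'$: every vertex of $U'$ and every vertex of $V'$ is incident to fewer than $d$ edges of any single color. I would split into cases according to how the two edges $u_{i_1}v_{j_1}$ and $u_{i_2}v_{j_2}$ overlap. It is cleanest to model the selection as choosing each vertex independently and uniformly from $U'$ (respectively $V'$), so that conditioning on all but one chosen vertex leaves that last vertex uniform over the full set.

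First I would handle the two shared-vertex cases. Suppose the edges share their $U$-endpoint, i.e.\ $i_1 = i_2$ and $j_1 \neq j_2$. Reveal $u_{i_1}$ and $v_{j_1}$ first; this fixes the color $c$ of $u_{i_1}v_{j_1}$. Since $u_{i_1} \in U'$, fewer than $d$ vertices $v \in V$ satisfy that $u_{i_1}v$ has color $c$. The event that $u_{i_1}v_{j_2}$ also has color $c$ is exactly the event that $v_{j_2}$ lands in this set of fewer than $d$ vertices. As $v_{j_2}$ is uniform over $V'$ and $|V'| > n - mr$, this conditional probability is below $d/|V'| < d/(n - mr)$, and averaging over the conditioning preserves the bound. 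The case $j_1 = j_2$, $i_1 \neq i_2$ is identical with the roles of $U'$ and $V'$ swapped. (The degenerate case $i_1 = i_2$, $j_1 = j_2$ is the same edge and may be discarded.)

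Next I would treat the disjoint case $i_1 \neq i_2$, $j_1 \neq j_2$. Here I reveal $u_{i_1}$, $v_{j_1}$, and also $u_{i_2}$, which fixes the color $c$ of $u_{i_1}v_{j_1}$ and fixes a vertex $u_{i_2} \in U'$. As before, $u_{i_2}$ is incident to fewer than $d$ edges of color $c$, so the event that $u_{i_2}v_{j_2}$ has color $c$ is the event that the still-free vertex $v_{j_2}$ falls among those fewer than $d$ vertices of $V'$; this again has probability below $d/|V'| < d/(n - mr)$.

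The main thing to get right is the bookkeeping on the denominator: in every case I must arrange that the last-revealed vertex ranges over a pool of size exceeding $n - mr$, which is exactly what the bounds $|U'|, |V'| > n - mr$ supply, and this is why treating the vertices as independently chosen is convenient. Beyond that, the only real content — used identically in all cases — is the observation that membership in $U'$ or $V'$ caps the number of same-colored edges at a fixed vertex by $d$; the case analysis merely records which of the two conditioned endpoints plays that role.
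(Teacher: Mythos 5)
Your proof is correct and takes essentially the same approach as the paper's: split into the shared-endpoint and disjoint cases, reveal all but one vertex, and use the defining property of $U'$ and $V'$ (fewer than $d$ incident edges of any single color) to bound the number of bad choices for the final vertex by $d$ out of a pool of size exceeding $n-mr$. The only real difference is your with-replacement sampling model versus the paper's choice of distinct vertices (whose denominators $|U'|-1$ and $|V'|-1$ are still at least $n-mr$ by integrality); this change is harmless for the claim, and it stays compatible with the theorem's concluding union bound because a fully rainbow set of $st$ edges automatically forces the sampled vertices to be distinct.
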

\begin{proof}
    There are two cases, depending on whether or not the two vertices share an edge.

    In the first case, $u_{i_1}v_{j_1}$ and $u_{i_2}v_{j_2}$ do not share a vertex.  Once $u_{i_1}$ and $v_{j_1}$ are fixed, there are $(|U'|-1)(|V'|-1)$ remaining choices for $u_{i_2}$ and $v_{j_2}$.  Since every vertex in $U'$ is adjacent to at most $d$ vertices of the same color, at most $d(|U'|-1)$ of these choices will have the edge $u_{i_2}v_{i_2}$ the same color as $u_{i_1}v_{i_1}$. The probability that these two edges are the same color is thus at most \[\frac{d(|U'|-1)}{(|U'|-1)(|V'|-1)} \leq \frac{d}{n-mr.}\]

    In the second case, $u_{i_1}v_{j_1}$ and $u_{i_2}v_{j_2}$ do share a vertex -- without loss of generality, suppose $i_1 = i_2$. Once $u_{i_1}$ and $v_{j_1}$ are fixed, there are $|V'|-1$ remaining choices for $v_{j_2}$, and at most $d$ of these will have $u_{i_1}v_{i_2}$ the same color as $u_{i_1}v_{i_1}$.  Therefore, the probability that these two edges are the same color is at most
    \[\frac{d}{(|V'|-1)} \leq \frac{d}{n-mr.}\]
    
\end{proof}

There are $st$ edges total among these vertices, so by the union bound, the probability that any pair of them is the same color is at most
\begin{align*}\binom {st} 2 \cdot \frac{d}{n-mr} = \binom {st}{2} \frac{4(s-1)s^{2(t-1)}t^{2(t-1)}r}{2(s-1)s^{2t}t^{2t}r} = \frac{2}{s^2t^2}\binom{st}{2} < 1.\end{align*}
So there is some choice of $u_1,\dots, u_s, v_1,\dots, v_t$ which creates a rainbow $K_{s,t}$, and we are finished.
\end{proofof}

\subsection{Proof of Theorems \ref{thm:star} and \ref{thm:K_2t}}
Theorem \ref{thm:K_st} holds for any values of $s, t$, but for the specific cases $s = 1, 2$, we can determine bounds on $n$ which are much smaller than $s^{2t}t^{2t}r$. The proof of Theorem \ref{thm:star} is a simple application of the pigeonhole principle.

\begin{proofof}{Theorem \ref{thm:star}} \cite{eroh04}
        With $n = (p-1)(q-1) + 1$, let $K_{1,n}$ have vertex set $U \sqcup V$ with $U = \{u\}$ and $V = \{v_1, \dots, v_n\}$.  If $u$ is adjacent to at least $p$ edges with distinct colors, then we have found a rainbow $K_{1, p}$.  Otherwise, the edges adjacent to $u$ have at most $p-1$ colors.  By the pigeonhole principle, there must be at least $q$ edges of the same color, and we have found a monochromatic $K_{1,q}$.  
\end{proofof}

The proof of Theorem \ref{thm:K_2t} is more complex, but still revolves around the pigeonhole principle.  Similar to the proof of Theorem \ref{thm:K_st}, we will begin with a lemma controlling the number of edges of the same color that vertices in a monochromatic-and-rainbow$K_{2,t}$-free $K_{n_1, n_2}$ may be adjacent to.  However, while in the proof of Theorem \ref{thm:K_st} we showed that only a few vertices are adjacent to many edges of the same color, here we will show that almost all vertices are adjacent to lots of edges of the same color.

\begin{lemma}\label{lem:adj_colors_2}
    Suppose $K_{n_1, n_2}$ is colored with $r$ colors such that there is no monochromatic or rainbow copy of $K_{2,t}$.  Let the vertex set of $K_{n_1, n_2}$ be $U \sqcup V$ with $U = \{u_1, \dots u_{n_1}\}, V = \{v_1, \dots, v_{n_2}\}$.  Then, for any $1 \leq i, j \leq n_1$, at least one of $u_i$ and $u_j$ is adjacent to $\frac{n_2-(t-1)(r+1)}{4(t-1)}$ edges of the same color.
\end{lemma}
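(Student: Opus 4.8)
The plan is to argue by contradiction. Fix $u_i, u_j \in U$ and suppose that every color class of edges at $u_i$, and likewise every color class at $u_j$, has size strictly less than $M := \frac{n_2 - (t-1)(r+1)}{4(t-1)}$. From this I will extract either a monochromatic or a rainbow $K_{2,t}$, contradicting the hypothesis on the coloring.

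First I would partition $V$ according to how the edges to $u_i$ and $u_j$ are colored. Call $v \in V$ \emph{matched} if $u_iv$ and $u_jv$ receive the same color, and \emph{unmatched} otherwise. The matched vertices are easy to control: if $t$ matched vertices shared a common color $c$, then together with $u_i$ and $u_j$ they would form a monochromatic $K_{2,t}$ in color $c$. Hence each color accounts for at most $t-1$ matched vertices, so there are at most $(t-1)r$ matched vertices in total.

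The crux is bounding the unmatched vertices, and here I would pass to an auxiliary multigraph $\Gamma$ on the color set $\{1, \dots, r\}$: each unmatched $v$ contributes the edge $\{\,\mathrm{color}(u_iv),\,\mathrm{color}(u_jv)\,\}$, which is a genuine non-loop edge precisely because $v$ is unmatched. The key observation is that a matching of size $t$ in $\Gamma$ corresponds to a set of $t$ unmatched vertices whose $2t$ incident edge-colors are all distinct, i.e.\ to a rainbow $K_{2,t}$. Since no rainbow $K_{2,t}$ exists, $\Gamma$ has maximum matching size at most $t-1$; taking the endpoints of a maximum matching yields a vertex cover $S$ of $\Gamma$ with $|S| \le 2(t-1)$ (any edge avoiding $S$ could be added to enlarge the matching, so $S$ meets every edge). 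Thus every unmatched $v$ satisfies $\mathrm{color}(u_iv) \in S$ or $\mathrm{color}(u_jv) \in S$.

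Finally I would count. The number of $v$ with $\mathrm{color}(u_iv) \in S$ is at most the total size of the color classes in $S$ at $u_i$, which is less than $|S|\,M \le 2(t-1)M$ by the standing assumption, and symmetrically for $u_j$; so there are fewer than $4(t-1)M = n_2 - (t-1)(r+1)$ unmatched vertices. Adding the matched and unmatched counts gives $n_2 < (t-1)r + n_2 - (t-1)(r+1) = n_2 - (t-1)$, which is a contradiction since $t \ge 2$. The main obstacle, and the step that makes the constants line up, is the translation of ``no rainbow $K_{2,t}$'' into the matching/vertex-cover statement for $\Gamma$: once the color pairs at unmatched vertices are regarded as edges of a multigraph, the rainbow condition becomes a clean bound on the maximum matching, and the resulting vertex cover $S$ confines all unmatched vertices to the at most $2(t-1)$ colors of $S$, which is exactly where the color-class size assumption can be applied.
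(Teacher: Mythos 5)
Your proof is correct and is essentially the paper's argument in different clothing: your auxiliary multigraph $\Gamma$, maximum matching, and vertex cover $S$ correspond exactly to the paper's maximal family $\mathcal{P}$ of pairwise color-disjoint rainbow paths $u_iv_ku_j$ (whose $\leq 2(t-1)$ colors must, by maximality, cover every other rainbow path), and your final count is the paper's pigeonhole step run in the contrapositive. The matched/unmatched split and the $(t-1)r$ bound on matched vertices also match the paper's monochromatic-path bound verbatim, so this is the same proof, merely organized as a contradiction rather than a direct pigeonhole.
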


\begin{proof}
        Fix $1 \leq i, j \leq n_1$.  For each $1 \leq k \leq  n_2$, consider the path $\pi_k = u_iv_ku_j$ (see Figure \ref{fig:paths}). 

        \begin{figure}
            \includegraphics[width=0.3\hsize]{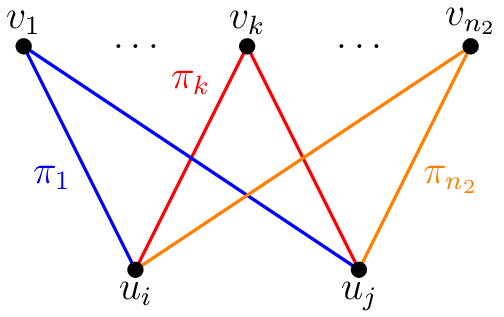}
            \centering
            \caption{Visualization of the paths $\pi_k$}
            \label{fig:paths}
        \end{figure}
        
        Every path $\pi_k$ is either monochromatic or rainbow.  Among these $n_2$ paths, at most $(t-1)r$ of them can be monochromatic: if there were more than $(t-1)r$ monochromatic paths, then by the pigeonhole principle there would have to be $t$ paths $\pi_{k_1}, \dots, \pi_{k_t}$ colored with the same color, but then $u_i, u_j, v_{k_1}, \dots, v_{k_t}$ would form a monochromatic $K_{2,t}$.  Thus, at least $n_2-(t-1)r$ of the paths $\pi_k$ are rainbow.

        Let $\mathcal P$ be a subset of the paths $\pi_1, \dots, \pi_{n_2}$ satisfying the following properties:
        \begin{itemize}
            \item Each $\pi_k \in \mathcal P$ is rainbow.
            \item For each pair of paths $\pi_i, \pi_j \in \mathcal P$, $\pi_i$ and $\pi_j$ share no colors.
            \item $\mathcal P$ is maximal: for each path $\pi_k \not \in \mathcal P$, $\pi_k$ shares a color with some path in $\mathcal P$.
        \end{itemize}

        The set $\mathcal P$ can be built greedily: as long as there's a rainbow path outside of $\mathcal P$ which shares no colors with the paths in $\mathcal P$, add it to $\mathcal P$.  Furthermore, we claim that $|\mathcal P| \leq t-1$: if $|\mathcal P|$ were at least $t$, then it would contain $t$ paths $\pi_{k_1},\dots, \pi_{k_t}$ such that no two paths share any colors; then, the vertices $u_i, u_j, v_{k_1}, \dots, v_{k_t}$ would form a rainbow $K_{2,t}$.  Thus, we must have $|\mathcal P| \leq t-1$.  Since there are at least $n_2-(t-1)r$ rainbow paths, there are at least $n_2-(t-1)(r+1)$ rainbow paths outside of $\mathcal P$.  See Figure \ref{fig:colorful_paths}.
        \begin{figure}
            \includegraphics[width=\hsize]{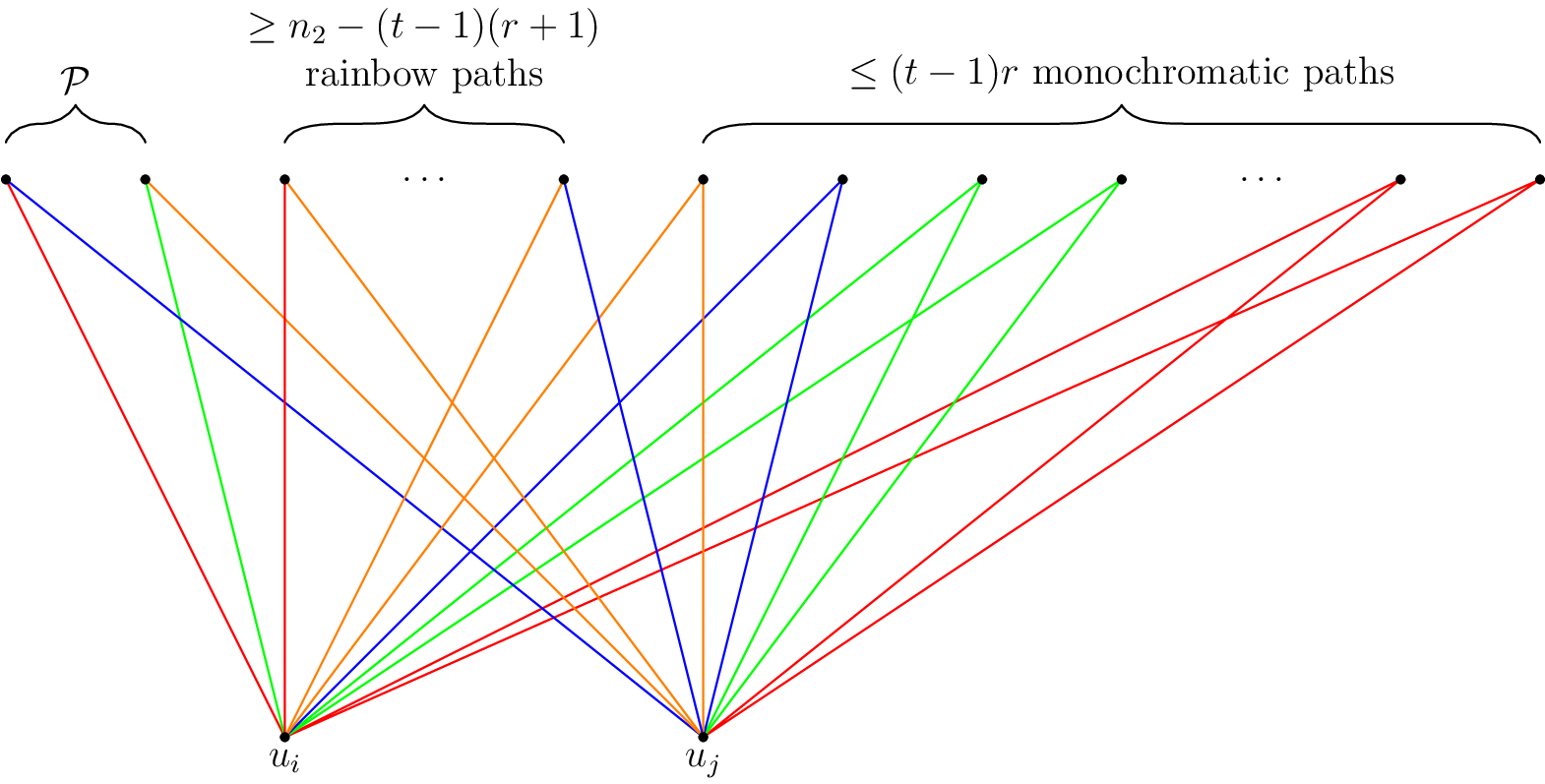}
            \centering
            \caption{All the paths $\pi_k$ adjacent to $u_i$ and $u_j$: at most $(t-1)r$ of them are monochromatic, and at most $t-1$ of them belong to $\mathcal P$.}
            \label{fig:colorful_paths}
        \end{figure}

        The paths in $\mathcal P$ contain $2|\mathcal P|$ unique colors.  Since each path outside of $\mathcal P$ shares a color with some path in $\mathcal P$, and there are at least $n_2-(t-1)(r+1)$ such paths, by the pigeonhole principle, there is some color which appears in at least $\frac{n_2-(t-1)(r+1)}{2|\mathcal P|}$ of these paths. 
        Without loss of generality, suppose this color is red.  Then, for each of the $\frac{n_2-(t-1)(r+1)}{2|\mathcal P|}$ paths containing a red edge, that edge is either connected to $u_i$ or $u_j$; again by the pigeonhole principle, at least $\frac 1 2 \cdot \frac{n-(t-1)(r+1)}{2|\mathcal P|}$ of these red edges are connected to the same vertex.  Thus, either $u_i$ or $u_j$ has at least $\frac{n_2-(t-1)(r+1)}{4|\mathcal P|} \geq \frac{n_2-(t-1)(r+1)}{4(t-1)}$ red edges, as desired.
        
    \end{proof}

\begin{cor}\label{cor:adj_edges}
 Suppose $K_{n_1, n_2}$ is colored with $r$ colors such that there is no monochromatic or rainbow copy of $K_{2,t}$. Let the vertex set of $K_{n_1, n_2}$ be $U \sqcup V$ with $U = \{u_1, \dots u_{n_1}\}, V = \{v_1, \dots, v_{n_2}\}$.  Then there is at most one vertex $u_i$ which is not adjacent to $\frac{n_2-(t-1)(r+1)}{4(t-1)}$ edges of the same color.
\end{cor}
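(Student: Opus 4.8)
The plan is to derive this corollary directly from Lemma~\ref{lem:adj_colors_2} by a short argument by contradiction, since the substantive work has already been done in establishing the pairwise statement. First I would suppose, for the sake of contradiction, that there exist two \emph{distinct} indices $i \neq j$ such that neither $u_i$ nor $u_j$ is adjacent to $\frac{n_2-(t-1)(r+1)}{4(t-1)}$ edges of the same color. The key step is then simply to invoke Lemma~\ref{lem:adj_colors_2} applied to this particular pair $(i,j)$: under the hypothesis that the coloring of $K_{n_1,n_2}$ contains no monochromatic or rainbow $K_{2,t}$, the lemma guarantees that at least one of $u_i$ and $u_j$ must be adjacent to at least $\frac{n_2-(t-1)(r+1)}{4(t-1)}$ edges of a single color. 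This directly contradicts the assumption that \emph{both} vertices fail this property.

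Having reached a contradiction, I would conclude that no two such ``deficient'' vertices can coexist, so at most one vertex $u_i$ fails to be adjacent to $\frac{n_2-(t-1)(r+1)}{4(t-1)}$ edges of the same color, which is exactly the claim. I do not expect any real obstacle here: the corollary is essentially a restatement of the lemma's pairwise conclusion in a global form, and the only thing to be careful about is ensuring that the two candidate vertices are distinct so that Lemma~\ref{lem:adj_colors_2} can legitimately be applied to the pair. All of the genuine combinatorial content---the path decomposition, the bound $|\mathcal{P}| \leq t-1$, and the iterated pigeonhole argument---lives inside the proof of Lemma~\ref{lem:adj_colors_2}, so this step requires no new machinery.
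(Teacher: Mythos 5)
Your proposal is correct and is essentially identical to the paper's proof, which likewise observes that two such deficient vertices would together contradict Lemma~\ref{lem:adj_colors_2} applied to that pair. Nothing further is needed.
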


\begin{proof}
    If two such vertices existed, they would contradict Lemma \ref{lem:adj_colors_2}.
\end{proof}

Corollary \ref{cor:adj_edges} implies that in a monochromatic-and-rainbow-$K_{2,t}$-free $K_{n_1, n_2}$, almost all of the vertices are adjacent to a lot of edges of the same color.  Given this, a simple counting argument completes the proof of Theorem \ref{thm:K_2t}.

\begin{proofof}{Theorem \ref{thm:K_2t}}
    With 
    \begin{align*}
    n_1 &= (6(t-1)-1)r+2,\\
    n_2 &= 2(t-1)\binom{6(t-1)}{2} + 3(t-1)(r+1)+1.
\end{align*}
    suppose for a contradiction that there exists an $r$-coloring of $K_{n_1,n_2}$ with no monochromatic or rainbow $K_{2,t}$. Let the vertex set of $K_{n_1, n_2}$ be $U \sqcup V$ with $U = \{u_1, \dots u_{n_1}\}, V = \{v_1, \dots, v_{n_2}\}$. By Corollary \ref{cor:adj_edges}, there is at most one vertex $u_i$ which is not adjacent to $\frac{n_2-(t-1)(r+1)}{4(t-1)}$ edges of the same color, and so at least $n_1-1 = (6(t-1)-1)r+1$ vertices $u_i$ are adjacent to $\frac{n_2-(t-1)(r+1)}{4(t-1)}$ edges of the same color.  By the pigeonhole principle, there is some color -- say, red -- such that at least $6(t-1)$ vertices $u_i$ are adjacent to at least $\frac{n_2-(t-1)(r+1)}{4(t-1)}$ edges of that color.  
    
    Without loss of generality, suppose that $u_1, \dots, u_{6(t-1)}$ are each adjacent to at least $\frac{n_2-(t-1)(r+1)}{4(t-1)}$ red edges, and for $1 \leq i \leq 6(t-1)$ let $A_i = \{v_k \mid 1 \leq k \leq n, \text{edge } u_iv_k\text{ is red}\}$. If $|A_i \cap A_j| \geq t$ for any $1 \leq i, j \leq 6(t-1)$, then $u_i, u_j$, and any $t$ vertices in $A_i \cap A_j$ will form a monochromatic red $K_{2,t}$, so we must have $|A_i\cap A_j|\leq t-1$ for each $i, j$. Using the principle of inclusion and exclusion, this tells us that 
    \begin{align*}
    \left|\bigcup_{i=1}^{6(t-1)}A_i\right| &\geq \sum_{i=1}^{6(t-1)} |A_i| - \sum_{1\leq i < j \leq 6(t-1)} |A_i \cap A_j| \\
    &\geq 6(t-1)\cdot \frac{n_2-(t-1)(r+1)}{4(t-1)} - \binom{6(t-1)}{2}(t-1)\\
    &= \frac 3 2 \cdot \left[2(t-1)\binom{6(t-1)}{2} + 2(t-1)(r+1)+1\right] - \binom{6(t-1)}{2}(t-1)\\
    &=2(t-1)\binom{6(t-1)}{2} + 3(t-1)(r+1) + \frac 3 2\\
    &= n_2 + \frac 1 2.
    \end{align*}
    But this is a contradiction, since each $A_i$ is a subset of $V$, and thus $\left|\bigcup_{i=1}^{6(t-1)}A_i\right| \leq |V| = n_2$.  Therefore, the coloring must have at least one monochromatic or rainbow $K_{2,t}$, completing the proof. 
\end{proofof}

\section{Euclidean Gallai-Ramsey Numbers}
\label{sec:euclidean}

By mapping the edge set of a complete bipartite graph into a subset of Euclidean space, we are able to translate our upper bounds on the graph Gallai-Ramsey numbers obtained in the previous section to upper bounds on the Euclidean Gallai-Ramsey numbers for certain configurations.  The map we will use is a slight generalization of a related map found in \cite{mao2022euclidean}.  To define it, we first define the set of points
\[Q_{s,a} = \left\{\left(\underbrace{0,0,\dots,0}_{i-1},\frac{a}{\sqrt{2}},\underbrace{0,0,\dots,0}_{s-i}\right)\in\mathbb{E}^{s}:1\leq i \leq s\right\}\]
In other words, $Q_{s,a}$ is the set of points in $s$-dimensional Euclidean space with exactly one nonzero coordinate equal to $\frac{a}{\sqrt 2}$.  The points in $Q_{s,a}$ are the vertices of a regular $(s-1)$-dimensional simplex with side length $a$, and thus the Cartesian product $Q_{s,a}\ast Q_{t,b}$ is the Cartesian product of a regular $(s-1)$-dimensional simplex of side length $a$ and a regular $(t-1)$-dimensional simplex of side length $b$.

However, recall that the Cartesian product of two point sets $K_1, K_2 \in E^{n_1}, E^{n_2}$ is defined by
$$K_1\ast K_2 = \{(x_1,\dots,x_{n_1},y_1,\dots,y_{n_2}) \mid (x_1,\dots,x_{n_1}) \in K_1, (y_1,\dots, y_{n_2})\in K_2\}.$$
Therefore, $Q_{s,a}\ast Q_{t,b}$ can also be described as the set of all points in $E^{s+t}$ with exactly one of the first $s$ coordinates equal to $\frac{a}{\sqrt 2}$ and one of the last $t$ coordinates equal to $\frac{b}{\sqrt{2}}$.  Denote by $x_{s, t, a, b, i, j}$ the point in $Q_{s,a}\ast Q_{t, b}$ which has $\frac{a}{\sqrt{2}}$ in the $i$-th coordinate and $\frac{b}{\sqrt{2}}$ in the $(s+j)$-th coordinate; that is,
\[x_{s,t,a,b,i,j} =\left(\underbrace{0,0,\dots,0}_{i-1},\frac{a}{\sqrt{2}},\underbrace{0,0,\dots,0}_{s+j-i-1},\frac{b}{\sqrt{2}},\underbrace{0,0,\dots,0}_{t-j}\right).\]

We now define the map $\phi_{s,t,a,b}: E \to Q_{s,a}\ast Q_{t,b}$ by
    \[\phi_{s,t,a,b}\left(u_iv_j\right)=x_{s,t,a,b,i,j}\] 

    For concision, we omit $s,t,a,b$ in subscripts when they are clear from context. Note that $\phi$ is a bijection between $E$ and $Q_{s,a}\ast Q_{t,b}$.  Furthermore, the image of a subgraph $K_{k, l}$ of $K_{s, t}$ under $\phi$ is itself the Cartesian product of a regular $(k-1)$-dimensional simplex of side length $a$ and a regular $(l-1)$-dimensional simplex of side length $b$ (see Figure \ref{fig:phi}).
\begin{figure}
    \centering
    \includegraphics[width=0.7\hsize]{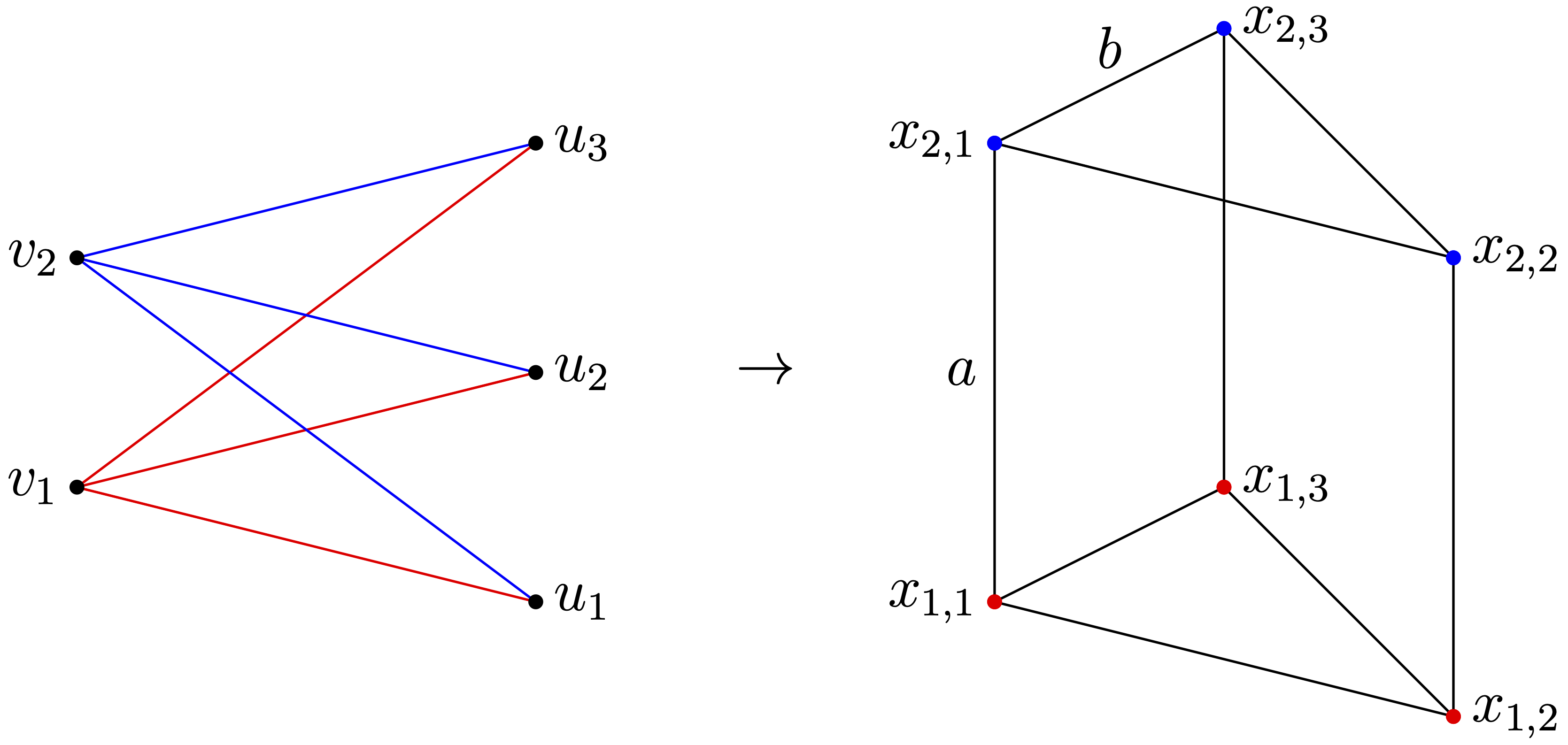}
    \caption{A visualization of $\phi$ from the edges of $K_{2,3}$  to an equilateral triangular prism (the Cartesian product of a regular 1-simplex with side length $a$ and a regular 2-simplex with side length $b$). For example, $\phi$ maps $(v_1,u_1)$ to $x_{1,1} = (\frac{a}{\sqrt{2}}, 0, \frac{b}{\sqrt{2}},0,0)$. Note that the figure on the right is a 3-dimensional projection of the resulting prism, which lies in 5-dimensional space.}
    \label{fig:phi}
\end{figure}
The map $\phi$ gives a correspondence between edge colorings of $K_{s,t}$ and point colorings of $Q_{s,a}\ast Q_{t,b}$, which allows us to translate graph Gallai-Ramsey results to Euclidean Gallai-Ramsey results via the following lemma.

\begin{lemma}\label{translation}
    Suppose that every $r$-coloring of $K_{n_1,n_2}$ contains a rainbow $K_{s_1,t_1}$ or a monochromatic $K_{s_2,t_2}$. Let $S_1 = \Delta_{s_1-1}\ast\Delta_{t_1-1}$ be the Cartesian product of a regular $(s_1-1)$-simplex of side length $a$ and a regular $(t_1-1)$-simplex of side length $b$, and define $S_2 = \Delta_{s_2-1}\ast\Delta_{t_2-1}$ similarly.  Then, every $r$-coloring of $\mathbb E^{n_1+n_2}$ contains a rainbow $S_1$ or a monochromatic $S_2$.
\end{lemma}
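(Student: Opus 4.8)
The plan is to transfer the hypothesized graph result to Euclidean space by restricting an arbitrary coloring of $\mathbb{E}^{n_1+n_2}$ to the finite point set $Q_{n_1,a}\ast Q_{n_2,b}$ and pulling it back through the bijection $\phi=\phi_{n_1,n_2,a,b}$. Concretely, I would let $\chi$ be any $r$-coloring of $\mathbb{E}^{n_1+n_2}$. Since $Q_{n_1,a}\ast Q_{n_2,b}$ is a subset of $\mathbb{E}^{n_1+n_2}$, the map $\chi$ assigns a color to each of its points, so composing with $\phi$ produces an $r$-coloring $c$ of the edges of $K_{n_1,n_2}$ given by $c(u_iv_j)=\chi(\phi(u_iv_j))=\chi(x_{n_1,n_2,a,b,i,j})$. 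The only part of $\chi$ that will ever be used is its restriction to this finite set; the coloring of the rest of $\mathbb{E}^{n_1+n_2}$ is irrelevant.

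Next I would invoke the hypothesis: applied to the coloring $c$, it guarantees that $K_{n_1,n_2}$ contains either a rainbow $K_{s_1,t_1}$ or a monochromatic $K_{s_2,t_2}$, and I then push this configuration forward through $\phi$. In the rainbow case, the offending subgraph is a complete bipartite graph on some $s_1$ vertices of $U$ and $t_1$ vertices of $V$, all $s_1t_1$ of whose edges receive distinct colors under $c$. By the property of $\phi$ recorded just before the lemma (specialized to $s=n_1$, $t=n_2$, so that the image of any $K_{k,l}$-subgraph is congruent to $\Delta_{k-1}\ast\Delta_{l-1}$), the image of this $K_{s_1,t_1}$ is a configuration congruent to $S_1=\Delta_{s_1-1}\ast\Delta_{t_1-1}$ sitting inside $\mathbb{E}^{n_1+n_2}$. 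Since $c$ was defined as the pullback of $\chi$ along the bijection $\phi$, the color of each point equals the color of its preimage edge, so distinct edge colors correspond to distinct point colors; hence these $s_1t_1$ points form a rainbow copy of $S_1$ under $\chi$. The monochromatic case is handled identically with $K_{s_2,t_2}$ and $S_2$ in place of $K_{s_1,t_1}$ and $S_1$: a monochromatic $K_{s_2,t_2}$ maps to a single-colored point set congruent to $S_2$.

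There is no substantial obstacle here; the content of the statement is entirely the geometric setup already established for $\phi$, and what remains is a routine pullback–pushforward argument. The two points I would be careful to state explicitly are that the congruence-preserving property invoked above is exactly the one guaranteed for $\phi_{s,t,a,b}$, and that rainbow-ness and monochromaticity genuinely transfer in both directions, which is immediate because $c$ is the pullback of $\chi$ along a \emph{bijection}, so every color equality and every color inequality is preserved. Thus every $r$-coloring of $\mathbb{E}^{n_1+n_2}$ yields either a rainbow $S_1$ or a monochromatic $S_2$, as claimed.
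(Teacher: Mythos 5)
Your proposal is correct and follows essentially the same argument as the paper: restrict the Euclidean coloring to the point set $Q_{n_1,a}\ast Q_{n_2,b}$, pull it back through the bijection $\phi$ to an edge coloring of $K_{n_1,n_2}$, apply the graph hypothesis, and push the resulting rainbow $K_{s_1,t_1}$ or monochromatic $K_{s_2,t_2}$ forward to a congruent copy of $S_1$ or $S_2$. Your explicit remarks that only the restriction of $\chi$ to the finite set matters and that color equalities and inequalities transfer through the bijection are just slightly more careful statements of what the paper's proof does implicitly.
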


\begin{proof}
    Consider the subset $Q_{n_1,a}\ast Q_{n_2,b} \subset \mathbb E^{n_1+n_2}.$ An $r$-coloring of $Q_{n_1,a}\ast Q_{n_2,b}$ determines an $r$-coloring of $K_{n_1, n_2}$ by coloring edge $u_i, v_j$ with the color of the point $\phi(u_i, v_j)$. By assumption, this $r$-coloring contains a rainbow $K_{s_1,t_1}$ or a monochromatic $K_{s_2,t_2}$. The image of $K_{s_1,t_1}$ under $\phi$ is congruent to $S_1 = \Delta_{s_1-1}\ast\Delta_{t_1-1}$, and the image of $K_{s_2, t_2}$ under $\phi$ is congruent to $S_2 = \Delta_{s_2-1}\ast\Delta_{t_2-1}$. Thus, if $K_{n_1, n_2}$ contains a rainbow $K_{s_1,t_1}$, then $Q_{n_1,a}\ast Q_{n_2,b}$ contains a rainbow $S_1$, and if $K_{n_1,n_2}$ contains a rainbow $K_{s_2,t_2}$, then $Q_{n_1,a}\ast Q_{n_2,b}$ contains a monochromatic $S_2$.  Since $Q_{n_1,a}\ast Q_{n_2,b}$ lies in $\mathbb E^{n_1+n_2}$, any $r$-coloring of $\mathbb E^{n_1+n_2}$ must contain a rainbow $S_1$ or a monochromatic $S_2$.
\end{proof}

Theorems \ref{thm:simplex}, \ref{thm:prism_polytope}, and \ref{thm:simplex_product} now follow immediately from Theorems \ref{thm:star}, \ref{thm:K_2t}, and \ref{thm:K_st} via Lemma \ref{translation}.

\section{Future Work}
\label{sec:future}

Throughout this paper, we leveraged the a connection between graphs and Euclidean space to obtain Euclidean Gallai-Ramsey numbers.  However, by focusing on a finite, discrete set of Euclidean space, this approach necessarily loses out on the geometric power that other approaches to Euclidean Ramsey and Gallai-Ramsey theory provide.  In the process of obtaining the main results presented in this paper, the authors note a few interesting questions of Euclidean Gallai-Ramsey theory that do not seem amenable to the methods presented here.

The first question asks about the distinction between Ramsey and Gallai-Ramsey configurations:

\begin{open}
    Does there exist a configuration that is not Ramsey, but is Gallai-Ramsey?
\end{open}

Clearly, all Ramsey configurations are Gallai-Ramsey, but it is not clear whether the converse is true.  An equivalent formulation of this question asks about Cartesian products of Gallai-Ramsey configurations.  It is known that the Cartesian product of two Ramsey configurations is also Ramsey \cite{franklrodl90}. We may ask if the analogous statement holds for Gallai-Ramsey configurations: 
\begin{statement}\label{pt}
    If $K_1$ and $K_2$ are Gallai-Ramsey, then the Cartesian product $K_1 \ast K_2$ is also Gallai-Ramsey.
\end{statement} 

\begin{theorem}
    Statement \ref{pt} is true if and only if all Gallai-Ramsey configurations are Ramsey.
\end{theorem}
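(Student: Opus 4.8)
The plan is to prove the two implications separately, with essentially all of the content living in one direction. For the ``if'' direction, suppose every Gallai-Ramsey configuration is Ramsey. Given Gallai-Ramsey configurations $K_1$ and $K_2$, they are then both Ramsey, so by the theorem that a Cartesian product of two Ramsey configurations is Ramsey \cite{franklrodl90}, the product $K_1\ast K_2$ is Ramsey. Since every Ramsey configuration is trivially Gallai-Ramsey (a monochromatic copy is in particular ``monochromatic or rainbow''), $K_1\ast K_2$ is Gallai-Ramsey, and Statement \ref{pt} holds. This direction is immediate from the cited product theorem.

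For the ``only if'' direction I would argue by contrapositive: assuming some Gallai-Ramsey configuration $K$ is \emph{not} Ramsey, I will exhibit Gallai-Ramsey configurations whose Cartesian product is not Gallai-Ramsey, so that Statement \ref{pt} fails. Because $K$ is not Ramsey, there is a fixed number of colors $\rho$ such that for every dimension $d$ there is a $\rho$-coloring $c_d$ of $\mathbb{E}^d$ containing no monochromatic copy of $K$. The construction is to take $K_1 = K$ and let $K_2 = \Delta_m$ be a regular $m$-simplex, choosing $m$ large enough that $|K|\cdot|\Delta_m| = |K|(m+1) > \rho$. Both factors are Gallai-Ramsey: $K$ by hypothesis, and $\Delta_m$ by Theorem \ref{thm:simplex} (taking both configurations to be $\Delta_m$). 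I then claim that the colorings $c_d$, which use only $\rho$ colors, witness that $K\ast\Delta_m$ is not Gallai-Ramsey.

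Two elementary observations close the argument. First, $K\ast\Delta_m$ contains a congruent copy of $K$: fixing any vertex $y_0$ of $\Delta_m$, the set $\{(x,y_0) : x\in K\}$ is an isometric copy of $K$ inside the product. Hence any monochromatic copy of $K\ast\Delta_m$ contains a monochromatic copy of $K$, and since $c_d$ has none, $c_d$ contains no monochromatic $K\ast\Delta_m$. Second, a rainbow copy of $K\ast\Delta_m$ would require $|K|(m+1)$ pairwise distinct colors among its points, but $c_d$ uses only $\rho < |K|(m+1)$ colors, so no rainbow copy exists. Thus for every $d$ the coloring $c_d$ avoids both monochromatic and rainbow copies of $K\ast\Delta_m$, so $K\ast\Delta_m$ is not Gallai-Ramsey, contradicting Statement \ref{pt}.

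The only point I expect to require genuine care is the order of quantifiers in the second direction: the witnessing color count $\rho$ for the non-Ramsey configuration $K$ must be fixed \emph{first}, and only afterward may the simplex $\Delta_m$ be chosen with enough \emph{points} (not dimension) that a rainbow copy of the product becomes impossible with $\rho$ colors. Once this decoupling is set up correctly there is no real obstacle, since both the sub-configuration inclusion $K \hookrightarrow K\ast\Delta_m$ and the color-counting bound for rainbow copies are entirely elementary.
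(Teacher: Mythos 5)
Your proof is correct and takes essentially the same approach as the paper: the ``if'' direction is the identical appeal to the product theorem for Ramsey configurations, and your ``only if'' direction is just the contrapositive of the paper's direct argument, built on the same construction (take the product of $K$ with a simplex having more points than the witnessing number of colors, so that rainbow copies are impossible by counting and any monochromatic copy of the product yields a monochromatic copy of $K$). The one cosmetic difference is that the paper fixes Statement \ref{pt} and shows each Gallai-Ramsey $K_1$ is Ramsey for every $r$, while you fix the non-Ramsey witness $\rho$ and exhibit a single product that fails to be Gallai-Ramsey; the quantifier bookkeeping you flag is handled correctly in both versions.
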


\begin{proof}
    The first direction follows immediately from the product theorem for Ramsey configurations: suppose all Gallai-Ramsey configurations are Ramsey. Then if $K_1$ and $K_2$ are Gallai-Ramsey, they're Ramsey, so $K_1\ast K_2$ is Ramsey, and hence Gallai-Ramsey.  Hence, if all Gallai-Ramsey configruations are Ramsey, Statement \ref{pt} holds for Gallai-Ramsey configurations.
    
    In the other direction, suppose Statement \ref{pt} is true for Gallai-Ramsey configurations, and let $K_1$ be a Gallai-Ramsey configuration.  We will show that $K_1$ is Ramsey.  For any integer $r$, let $K_2$ be a regular $r$-dimensional simplex with $r+1$ vertices.  Because $K_2$ is Ramsey, it's Gallai-Ramsey, so by assumption, $K_1 \ast K_2$ is Gallai-Ramsey.  Thus, there's some $n$ such that every coloring of $\mathbb E^n$ contains a monochromatic or rainbow copy of $K_1 \ast K_2$ with $r$ colors. But $K_1\ast K_2$ contains $|K_1||K_2| = |K_1|(r+1) > r$ points, so no coloring of $\mathbb E^n$ with $r$ colors can contain a rainbow copy of $K_1\ast K_2$.  Hence, every $r$-coloring of $\mathbb E^n$ contains a monochromatic $K_1 \ast K_2$, and hence a monochromatic $K_1$.  Therefore, $K_1$ is Ramsey. 
\end{proof}

 One plausible candidate for a set which is Gallai-Ramsey but not Ramsey is $l_3$, the set of three co-linear points with pairwise distances 1, 1, and 2.  It is known that any Ramsey configuration must be embeddable in a sphere \cite{erdos1973}, and since this is not true of $l_3$, it is not Ramsey. It is not known whether $l_3$ is Gallai-Ramsey.  However, if we restrict ourselves to \textit{spherical} colorings of $\mathbb E^n$ (colorings for which all points of the same magnitude are the same color), then it is known that every spherical coloring of $\mathbb E^n$ for $n \geq 2$ contains a monochromatic or a rainbow $l_3$.  This makes the general question quite tantalizing.
\begin{open}
    Is $l_3$ Gallai-Ramsey?
\end{open}
In a separate vein, we note that most of the Euclidean Gallai-Ramsey results obtained in this paper involve a linear or exponential dependency on $r$. However, for some configurations, like a square, it is known that there is an absolute constant $C$ such that any coloring of $\mathbb E^C$ contains a monochromatic or rainbow copy of the configuration, regardless of the number of colors \cite{cheng2023euclidean}.  We are curious if there exist bounds for the configurations studied in this paper that have no dependency on the number of colors.

\begin{open}
    For fixed integers $s, t$, does there exist an absolute constant $C$ such that any coloring of $\mathbb E^C$ contains a monochromatic or a rainbow copy of $\Delta_s\ast \Delta_t$?
\end{open}

\section{Acknowledgements}
\label{sec:acknowledgements}

The authors are grateful to Lisa Sauermann and Zixuan Xu for their mentorship and helpful suggestions throughout the project, as well as to David Jerison and the rest of the MIT SPUR faculty for organizing the project.
\bibliographystyle{elsarticle-num} 
\bibliography{cas-refs}


\end{document}